\newcommand{\be}{\begin{equation}}
\newcommand{\ee}{\end{equation}}
\newcommand{\bea}{\begin{eqnarray}}
\newcommand{\eea}{\end{eqnarray}}
\newcommand{\bvec}{\left(\begin{array}{c}}
	\newcommand{\evec}{\end{array}\right)}
\newcommand{\bsub}{\begin{subequations}}
	\newcommand{\esub}{\end{subequations}}
\providecommand{\e}[1]{\ensuremath{\times 10^{#1}}} 
\newtheorem{theorem}{Theorem} 
\theoremstyle{definition}
\theoremstyle{remark}
\theoremstyle{corollary}
\theoremstyle{lemma}
\begin{document}

\title{Spatio-Temporal Economic Properties of Multi-Product Supply Chains}
	
\author[Madison]{Philip A. Tominac}
\ead{tominac@wisc.edu}

\author[Madison]{Weiqi Zhang}
\ead{wzhang483@wisc.edu}
	
\author[Madison]{Victor M. Zavala\corref{CorrespondingAuthor}}
\ead{victor.zavala@wisc.edu}
	
\address[Madison]{University of Wisconsin-Madison, 1415 Engineering Dr, Madison, WI 53706, USA}
\cortext[CorrespondingAuthor]{Corresponding author: V.~M.~Zavala. Tel. +1 (608) 263-7602. Fax +1 (608) 262-5434}
	
\begin{abstract}
In this work, we analyze the spatio-temporal economic properties of multi-product supply chains. Specifically, we interpret the supply chain as a coordinated market in which stakeholders (suppliers, consumers, and providers of transportation, storage, and transformation services) bid into a market that is cleared by an independent entity to obtain allocations and prices. The proposed model provides a general graph representation of spatio-temporal product transport that helps capture geographical transport, time delays, and storage (temporal transport) in a unified and compact manner. This representation allows us to establish fundamental economic properties for the supply chain (revenue adequacy, cost recovery, and competitiveness) and to establish bounds for space-time prices. To illustrate the concepts, we consider a case study in which organic waste is used for producing biogas and electricity. Our market model shows that incentives for waste storage emerge from electricity demand dynamics and illustrates how space-time price dynamics for waste and derived products emerge from geographical transport and storage.
\end{abstract}
	
\begin{keyword}
	Supply chain management, economics, dynamics, optimization, markets. 
\end{keyword}
\maketitle



\section{Introduction}

Multi-product supply chains (SCs) arise in a wide range of industrial applications such as energy infrastructures (e.g., biofuels from biomass and coupled electrical power-natural gas systems) \citep{Mitridati2020,Duenas2015}, waste infrastructures (e.g., plastic, livestock, food), and chemical manufacturing (e.g., pharma, petrochemical, semiconductors) \citep{Lima2016,Barbosa2014}.  A key defining aspect of the multi-product SCs that we study is the presence of {\em product transformation} (i.e.,  products are processed/combined/transformed to obtain other derived products). Multi-product SCs typically involve a wide range of stakeholders such as suppliers and consumers of products, providers of transport, processing, and storage (inventory) services, and other external actors (e.g., policy). This creates a transaction network that exhibits complex interconnectivity across products, stakeholders, spatial  (geographical) locations, and time (see Figure \ref{FigSCTopology}); moreover, stakeholders are often competitive, strategic, and profit-maximizing entities \citep{Garcia2015,Papageorgiou2009}.  As such, understanding the emergent behavior of product flows and of their inherent monetary values (prices) in multi-product SCs is technically challenging. 

It has been recently observed that SCs exhibit parallels with coordinated markets \citet{Sampat2019waste}; in such markets, stakeholders bid into a coordination system (a coordinator) that determines stakeholder allocations and product prices by solving a clearing problem (a mathematical optimization problem). A coordinated market interpretation of SCs provides a useful view that can help understand their emergent behavior and economic properties. for instance, one can show that an optimal solution of the SC delivers competitive equilibria, cost recovery (no stakeholder incurs financial loss), revenue adequacy (payments collected from consumers balances revenue of service providers), and price boundedness (clearing prices are compatible with bids).  Moreover, a coordinated market setting can offer a potential avenue to manage actual SC operations. For instance, coordinated markets have been successfully deployed in electrical power systems \citep{Blumstein2002,Hogan2002,Bohn1984}; in this context, coordination is key to ensure that consumers have access to a continual supply of electricity (complicated due to capacity limitations of power generators and of the transmission network) and is key to ensure transparent prices that reveal the inherent value of electricity. \citet{Pratt1996} use coordinated markets to handle multiple dairy products (e.g., milk, cheese, yoghurt) over a SC that spans the entire United States. 

Alternative mulit-product SC models and associated economic interpretations have been proposed in the literature. \citet{Mokhlesian2014} developed a bilevel framework for multi-product SCs which determines coordinating prices and inventory in competitive settings. \citet{Saghaeeian2018} present a Stackelberg framework for competitive multi-product markets; these market models aim to capture realistic, strategic behavior but are computationally challenging to handle. Early work in developing coordinated market models for multi-product SCs was presented by \citet{Thomas1996}. Recent work by \citet{Sampat2017,Sampat2019waste}, \citet{Tominac2020}, and \citet{Tominac2021} developed general coordination schemes for handling steady-state, multi-product SCs. This work shows that desirable economic properties of coordinated markets can be obtained under a general setting that involves different types of stakeholders and product transformations. Moreover, large instances of these models can be handled using state-of-the-art optimization solvers.  In this work, we extend this SC modeling setting by capturing dynamic behavior; this allows us to incorporate product storage and transportation delays. We use a graph-theoretic representation that allows us to capture geographical transport, storage, and delays in a unified manner; specifically, we show that these effects can be modeled using space-time flows that transport product cross space and time. This effectively creates a space-time graph representation of the SC, similar in spirit to that used in the network flow literature \citep{ahuja1988network}. We show that the proposed graph-theoretic representation allows us to establish fundamental economic properties for the SC model in a compact and intuitive manner.  

\begin{figure}[!htb]
	\center{\includegraphics[width=100mm]{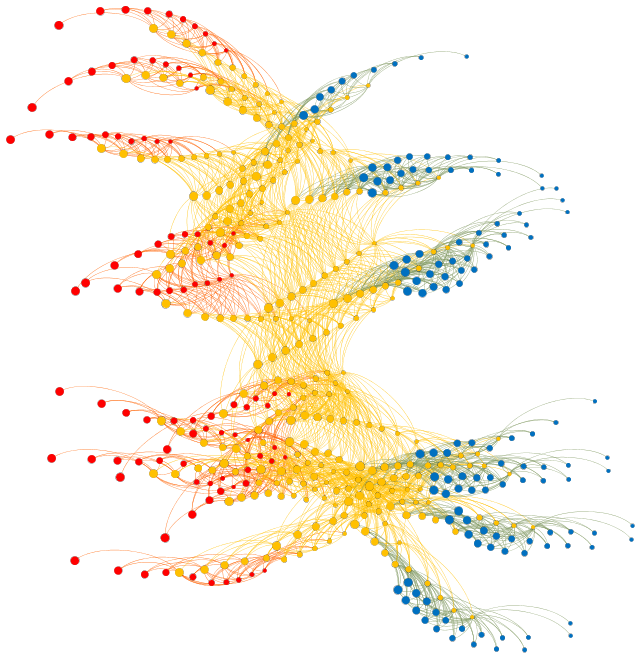}}
	\caption{A supply chain topology can be represented as a graph that connects stakeholders in space and time. Suppliers are visualized in red, technology providers in yellow, and consumers in blue. The arcs represent product transport across spatial and temporal dimensions.}
	\label{FigSCTopology}
\end{figure}

\begin{figure}[!htb]
	\center{\includegraphics[width=150mm]{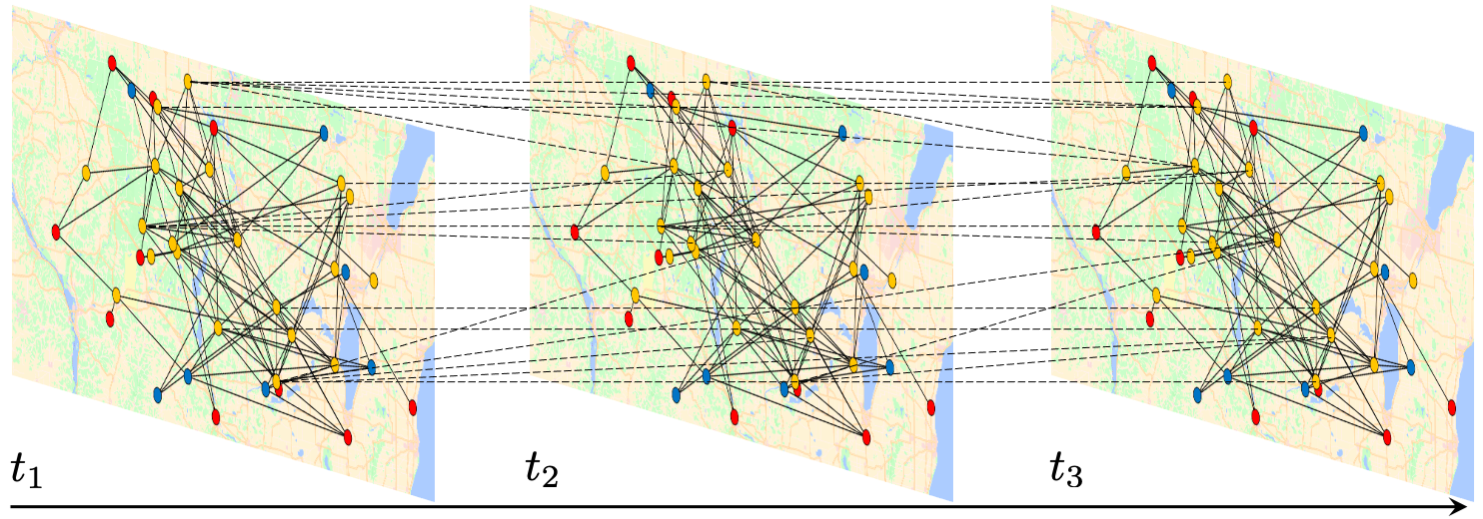}}
	\caption{Geographical visualization of a supply chain. Spatial transport flows (at a given time) are illustrated by solid arrows on the maps (representing individual times) and transport flows traversing time dimension are indicated with dashed lines.}
	\label{FigConcept}
\end{figure}

\section{Supply Chain Model}\label{sec:formula}

The proposed SC model (and associated economic properties) is an extension of the steady-state (SS) model proposed in \citep{Tominac2020} to a dynamical setting. Central to our development is the generalization of the concept of transport product flows from a spatial (geographical) setting to a space-time setting. Space-time flows will allow us to capture a wide range of features (e.g., delays and storage) under a unified framework, will reveal mechanisms that can be used to remunerate stakeholders, and will be used to establish bounding properties for space-time prices. The concept of space-time flows has been recently proposed in the context of electrical power markets (single-product markets) by \citet{zhang2021electricity}; here, we generalize this concept to a multi-product setting. 

To highlight the relevance and properties of space-time flows, we will construct our SC model by first introducing a model under a steady-state setting that only captures spatial transport (we refer to this as the SC-SS model). We will then extend this model by considering a dynamical setting; this model can be seen as a sequence of time snapshots of a steady-state SC that allows for transport of product across time (we refer to this as the SC-ST model). We will see that, by using a graph-theoretical representation, one can represent SC-ST in a form that is directly analogous to SC-SS; this will be key to generalizing economic properties from steady-state to a dynamical setting. The space-time SC setting under study is illustrated in Figure \ref{FigConcept}.

\subsection{Supply Chain Model (Steady-State)}

The steady-state SC setting contains a set of suppliers $\mathcal{G}$, consumers $\mathcal{D}$, geographical (spatial) transport providers $\mathcal{L}$, and product transformation (or technology) providers $\mathcal{M}$. The SC model has an associated graph ${G}(\mathcal{N},\mathcal{A})$, where $\mathcal{N}$ is a set of spatial nodes and $\mathcal{A}$ is the set of undirected arcs (edges, links) connecting such spatial nodes via product transport.  Consumers, suppliers, and technology providers are attached to a given spatial node, while transportation providers are attached to a given arc.  

Each supplier $i\in\mathcal{G}$ has an associated supply flow variable $g_{i}\in\mathbb{R}_{+}$, a supply capacity parameter $\overline{g}_{i}\in\mathbb{R}_{+}$ (representing the maximum flow that it can deliver), and a bid cost parameter $\alpha^{g}_{i}\in\mathbb{R}$. We use $n(i)\in\mathcal{N}$ and $p(i)\in\mathcal{P}$ to denote the node at which supplier is located and the product that it supplies, respectively. If the supply bid cost is positive, the supplier expects payment for product $p(i)$ delivered and, when it is negative, the supplier offers a payment to have $p(i)$ be taken away. This indicates that suppliers may act as either revenue sources or revenue sinks (depending on the sign of their bid); as such, we categorize suppliers using the subsets $\mathcal{G}^{+}\subseteq\mathcal{G}$ of suppliers with $\mathcal{G}^{+}:=\{i\in\mathcal{G}|\alpha^{g}_{i} \geq 0\}$ and $\mathcal{G}^{-}\subseteq\mathcal{G}$ of suppliers with $\alpha^{g}_{i} < 0$. Supplier negative bid costs are common in waste markets (the supplier of waste desires to get rid of it and is willing to pay a tipping fee). We also categorize suppliers by location and product by defining the subsets $\mathcal{G}_{n,p}\subseteq\mathcal{G}$ (where $\mathcal{G}_{n,p}:=\{i|n(i)=n,p(i)=p\}$).

Each consumer $j\in\mathcal{D}$ has a demand flow variable $d_{j}\in\mathbb{R}_{+}$, a demand capacity parameter $\overline{d}_{j}\in\mathbb{R}_{+}$ (indicating the maximum demand flow that it can receive), and a purchase bid cost parameter $\alpha^{d}_{j}\in\mathbb{R}$. We use $n(j)\in\mathcal{N}$ and $p(j)\in\mathcal{P}$ to indicate the consumer location and the product requested. A positive bid cost indicates that a consumer offers payment in exchange for a product while a negative bid indicates that a consumer demands payment on receiving a product. As with suppliers, consumers in a market may act as either revenue sources or sinks. We define the subsets $\mathcal{D}^{+}\subseteq\mathcal{D}$ of consumers with bids $\alpha^{d}_{j} \geq 0$ and $\mathcal{D}^{-}\subseteq\mathcal{D}$ with $\alpha^{d}_{j} < 0$. Consumer negative bid costs are also common in waste markets (a landfill can act as a consumer that requests a tipping fee to take waste). We also categorize consumers by location and product by defining the subsets $\mathcal{D}_{n,p}\subseteq\mathcal{D}$ where $\mathcal{D}_{n,p}:=\{j|n(j)=n,p(j)=p\}$.

Each transport provider $l\in\mathcal{L}$ has a transport flow variable $f_{l}\in\mathbb{R}_{+}$,  capacity parameter $\overline{f}_{l}\in\mathbb{R}_{+}$, and bid cost parameter $\alpha^{f}_{l}\in\mathbb{R}_{+}$ (representing the cost of the transport service). Each transport provider moves product from a base (source) node $n_{b}(l)\in\mathcal{N}$ to a receiving (destination) node $n_{r}(l)\in\mathcal{N}$ via the arc $a:=(n_{b},n_{r})\in\mathcal{A}$ and we use $p(l)\in\mathcal{P}$ to denote the product that is transported. We categorize transport providers using the subsets $\mathcal{L}_{n,p}^{in}\subseteq\mathcal{L}$ and $\mathcal{L}_{n,p}^{out}\subseteq\mathcal{L}$, which correspond to incoming and outgoing providers for a given node $n$ and for a given product $p$; here, $\mathcal{L}_{n,p}^{in}:=\{l|n_{r}(l)=n,p(l)=p\}$  and $\mathcal{L}_{n,p}^{out}:=\{l|n_{b}(l)=n,p(l)=p\}$.  We use the notation $n_b(a)\in \mathcal{N}$ and $n_r(a)\in \mathcal{N}$ to denote the base and receiving nodes of arc $a$, respectively. In graph-theoretical terminology, $n_b(a)$ and $n_r(a)$ are the support nodes of arc $a$.  

Each technology provider $m\in\mathcal{M}$ transforms a set of input products $p\in\mathcal{P}^{con}_{m}\subset\mathcal{P}$ into a set of output products $p\in\mathcal{P}^{gen}_{m}\subset\mathcal{P}$. The provider thus acts simultaneously as a consumer and supplier of products and introduces interconnectivity between  products. Each technology has a flow processing variable $\xi_{m}\in\mathbb{R}_{+}$ representing either consumption or generation flow in terms of a reference product $\bar{p}_{m}\in\mathcal{P}^{con}_{m}$. Product transformations are defined by a set of yield coefficient parameters $\gamma_{m,p}\in\mathbb{R}_{+},p\in\{\mathcal{P}^{con}_{m},\mathcal{P}^{gen}_{m}\}$ with respect to the reference product $\bar{p}_{m}$ such that $\gamma_{m,\bar{p}}=1$. Each technology has an processing capacity $\overline{\xi}_{m}\in\mathbb{R}_{+}$, and a bid cost $\alpha^{\xi}_{m}\in\mathbb{R}_{+}$ (technology service cost) in terms of the reference product $\bar{p}(m)$. We classify these players by the types of products that they consume or generate by using the subsets $\mathcal{M}_{n,p}^{con}\subseteq\mathcal{M}$ and $\mathcal{M}_{n,p}^{gen}\subseteq\mathcal{M}$ where $\mathcal{M}_{n,p}^{con}:=\{m|n(m)=n,p(m)\in\mathcal{P}_{m}^{con}\}$ and $\mathcal{M}_{n,p}^{gen}:=\{m|n(m)=n,p(m)\in\mathcal{P}_{m}^{gen}\}$. Negative bid costs for transport and technology services are not considered, as these do not have practical interpretation; however, the formulation can be easily extended to allow for this.

We interpret the SC model as a market clearing formulation; this interpretation reveals economic properties that can help explain behavior (e.g., product and revenue flows across the system). To establish this connection, we consider that stakeholders submit bidding information that consists of costs $(\alpha^{g},\alpha^{d},\alpha^{f},\alpha^{\xi})$ and capacities $(\overline{g},\overline{d},\overline{f},\overline{\xi})$. An independent entity, called an independent system operator (ISO), clears the market by solving the optimization problem \eqref{PrimalSS} (we call this the SC-SS model). In a market context, this formulation determines product allocations for the stakeholders and determines the inherent economic value for such allocations (prices). The formulation also implicitly provides a mechanism to remunerate stakeholders. 

The objective function \eqref{PrimalObjSS} of the clearing formulation aims to find allocations $(g,d,f,\xi)$ that maximize the demand served and that minimize the service costs. This objective naturally makes allocations to the highest-bidding consumers and the lowest-bidding suppliers, transport providers, and technology providers (prioritizes based on bid costs). The presence of negative bid costs reverses prioritization logic (e.g., the formulation allocates product to suppliers with negative bid costs). Maximizing this objective is equivalent to maximize the total profit of the stakeholders (difference between their revenues and costs); as such, the objective of the clearing formulation is often called the {\em total surplus}.  The dual variables $\pi$ of the nodal product balances \eqref{PrimalClearingSS} (known as {\em clearing constraints}) play a key role in remunerating stakeholders. 

\begin{subequations}\label{PrimalSS}
	\begin{equation}\label{PrimalObjSS}
	\begin{aligned}
	\max_{s,d,f,\xi} \quad \sum_{j \in \mathcal{D}}{\alpha^{d}_{j}}d_{j} -
	\sum_{i \in \mathcal{G}}{\alpha^{g}_{i}}g_{i} -
	\sum_{l\in\mathcal{L}}{\alpha^{f}_{l}f_{l}} - 
	\sum_{m\in\mathcal{M}}{\alpha^{\xi}_{m}\xi_{m}}
	\end{aligned}
	\end{equation}
	\begin{multline}\label{PrimalClearingSS}
	\textrm{s.t.}\;
	\sum_{i\in\mathcal{G}_{n,p}}{g_{i}} +
	\sum_{l\in\mathcal{L}^{in}_{n,p}}{f_{l}} +
	\sum_{m\in\mathcal{M}_{n,p}^{gen}}{\gamma_{m,p}\xi_{m}} =\\
	\sum_{j\in\mathcal{D}_{n,p}}{d_{j}} +
	\sum_{l\in\mathcal{L}^{out}_{n,p}}{f_{l}} +
	\sum_{m\in\mathcal{M}_{n,p}^{con}}{\gamma_{m,p}\xi_{m}},
	\quad (n,t,p)\in\mathcal{N}\times\mathcal{T}\times\mathcal{P},\; (\pi_{n,p})
	\end{multline}
	\begin{equation}\label{PrimalSupSS}
	\begin{aligned}
	g_{i}\leq \overline{g}_{i},\quad i\in\mathcal{G},\; (\overline{\lambda}_{i})
	\end{aligned}
	\end{equation}
	\begin{equation}\label{PrimalDemSS}
	\begin{aligned}
	d_{j}\leq \overline{d}_{j},\quad j\in\mathcal{D},\; (\overline{\lambda}_{j})
	\end{aligned}
	\end{equation}
	\begin{equation}\label{PrimalFloSS}
	\begin{aligned}
	f_{l}\leq \overline{f}_{l},\quad l\in\mathcal{L},\; (\overline{\lambda}_{l})
	\end{aligned}
	\end{equation}
	\begin{equation}\label{PrimalTechSS}
	\begin{aligned}
	\xi_{m}\leq \overline{\xi}_{m},\quad m\in\mathcal{M},\; (\overline{\lambda}_{m})
	\end{aligned}
	\end{equation}
\end{subequations}

\subsection{Supply Chain Model (Dynamical)}

We now extend the steady-state SC model to a dynamical setting by considering a sequence of times $t\in \mathcal{T}$.  The set of times $\mathcal{T}$ is typically known as the time horizon (or planning horizon). In the proposed setting, we interpret a given time $t$ as a {\em node} in a temporal graph;  as such, we construct a space-time graph ${G}(\mathcal{S},\mathcal{A})$ with space-time nodes given by the index pair $s:=(n,t)\in \mathcal{S}$ and with undirected arcs $\mathcal{A}$ connecting space-time nodes. We refer to this model as SC-ST; under this setting, suppliers, consumers, and transformation providers participate at a given spatial node $n\in\mathcal{N}$ and at a given time node $t\in \mathcal{T}$ (or simply $s\in \mathcal{S}$).  In other words, they offer or request product and services at a given space-time node $s$. Moreover, under this setting, transport providers move product across space-time nodes via arcs. The time set $\mathcal{T}$ is ordered (the spatial set $\mathcal{N}$ is not) and it is thus often expressed as $\mathcal{T}=\{t_0,t_1,....,t_T\}$ with $t_0\leq t_1\leq \cdots \leq t_T$. We define the distance between a couple of subsequent time nodes $t_{j}$ and $t_{j+1}$ as $\delta(t_j,t_{j+1})$; this distance is often referred to as the time step.  For simplicity, we assume that all the time steps are equal and given by $\delta(t_j,t_{j+1})=\delta$ for all $j=0,...,T-1$.

Under a space-time setting, each supplier $i\in\mathcal{G}$ has a flow variable $g_{i}\in\mathbb{R}_{+}$, capacity parameter $\overline{g}_{i}\in\mathbb{R}_{+}$, and a bid cost parameter $\alpha^{g}_{i}\in\mathbb{R}$. Moreover, each supplier offers product product $p(i)\in\mathcal{P}$ at a space-time node $s(i)=(n(i),t(i))\in\mathcal{S}$ (with $n(i)\in \mathcal{N}$ and $t(i)\in \mathcal{T}$). As before, we categorize suppliers based on their bid cost as $\mathcal{G}^{+}:=\{i\in\mathcal{G}|\alpha^{g}_{i} \geq 0\}$ and $\mathcal{G}^{-}:=\{i\in\mathcal{G}|\alpha^{g}_{i} < 0\}$ and $\mathcal{G}^{-}\subseteq\mathcal{G}$. We also define subsets $\mathcal{G}_{s,p}\subseteq\mathcal{G}$ (with $\mathcal{G}_{s,p}:=\{i|s(i)=(n,t),p(i)=p\}$) to categorize suppliers by space-time location and product. 

Each consumer $j\in\mathcal{D}$ has a flow variable $d_{j}\in\mathbb{R}_{+}$, capacity parameter $\overline{d}_{j}\in\mathbb{R}_{+}$, and bid cost parameter $\alpha^{d}_{j}\in\mathbb{R}$. As with suppliers, each consumer has an associated space-time node $s(j)=(n(j),t(j))\in\mathcal{S}$ and  product $p(j)\in\mathcal{P}$. We define the subsets $\mathcal{D}^{+}\subseteq\mathcal{D}$ of consumers with bids $\alpha^{d}_{j} \geq 0$ and $\mathcal{D}^{-}\subseteq\mathcal{D}$ with $\alpha^{d}_{j} < 0$. We also define subsets $\mathcal{D}_{s,p}\subseteq\mathcal{D}$ where $\mathcal{D}_{s,p}:=\{j|s(j)=(n,t),p(j)=p\}$.

Each technology provider $m\in\mathcal{M}$ converts a set of input products $p\in\mathcal{P}^{con}_{m}\subset\mathcal{P}$  into a set of output products $p\in\mathcal{P}^{gen}_{m}\subset\mathcal{P}$. Each technology has a variable flow $\xi_{m}\in\mathbb{R}_{+}$ and yield parameters $\gamma_{m,p}\in\mathbb{R}_{+},p\in\{\mathcal{P}^{con}_{m},\mathcal{P}^{gen}_{m}\}$. Each technology has an input capacity defined by $\overline{\xi}_{m}\in\mathbb{R}_{+}$, and a bid  cost $\alpha^{\xi}_{m}\in\mathbb{R}_{+}$. As with suppliers and consumers, each technology provides has an associated space-time node $s(m)\in \mathcal{S}$. We define subsets for technology providers $\mathcal{M}_{n,t,p}^{con}\subseteq\mathcal{M}$ and $\mathcal{M}_{n,t,p}^{gen}\subseteq\mathcal{M}$ where $\mathcal{M}_{s,p}^{con}:=\{m|s(m)=(n,t),p(m)\in\mathcal{P}_{m}^{con}\}$ and $\mathcal{M}_{s,p}^{gen}:=\{m|s(m)=(n,t),p(m)\in\mathcal{P}_{m}^{gen}\}$.

The key difference between the steady-state and the dynamical setting is in how transportation providers are defined and interpreted. Each transport provider $l\in\mathcal{L}$ has a space-time transport flow variable $f_{l}\in\mathbb{R}_{+}$, a capacity parameter $\overline{f}_{l}\in\mathbb{R}_{+}$, and a bid cost parameter $\alpha^{f}_{l}\in\mathbb{R}_{+}$. Each transport provider moves product from a source node $s_{b}(l)\in\mathcal{N}$  to a receiving node $s_{r}(l)\in\mathcal{N}$. The base node is given by $s_{b}(l):=(n_b(l),t_b(l))$ with $n_b(l)\in\mathcal{N}$ and $t_b(l)\in\mathcal{T}$ (a similar definition is used for the receiving node). We define subsets for transport providers $\mathcal{L}_{s,p}^{in}\subseteq\mathcal{L}$ and $\mathcal{L}_{s,p}^{out}\subseteq\mathcal{L}$ corresponding to inbound and outbound transport flow from a space-time node $s$ and for product $p$. Here, $\mathcal{L}_{s,p}^{in}:=\{l|s(l)=s_r(l),p(l)=p\}$ and $\mathcal{L}_{s,p}^{out}:=\{l|s_{b}(l)=s,p(l)=p\}$.  A transport provider moves product across time via the arc $a:=(s_b,s_r)\in\mathcal{A}$;  the arc can also be expressed $a=(n_b,t_b,n_r,t_r)$ to highlight the spatial and temporal nodes. We use the notation $n_b(a)\in \mathcal{N}$ and $t_b(a)\in \mathcal{T}$ to denote the support nodes of the space-time arc.

From the previous definitions it is clear that the concepts of space-time nodes and flows allow us to define transport providers in a manner that is directly analogous to the steady-state counterpart.  We now discuss how the notion of space-time flows offers modeling flexibility to capture different behavior encountered in supply chains. 

\begin{itemize}

\item A transport flow from a spatial location $n_{b}(l)$ to $n_{r}(l)$ at a fixed time location ($t_{b}(l)=t_{r}(l)$) can be used to capture short-term (instantaneous) transport.  This type of spatial transport is the one assumed in the SS-SC model. 

\item A transport flow $l$ from spatial location $n_{b}(l)$ to $n_{r}(l)$  and from time $t_{b}(l)$ to $t_{r}(l)$ (with $t_{r}(l)\geq t_{b}(l)$) can be used to capture long-term transport; here, the time distance $\delta(t_{r}(l),t_{b}(l))$ is the transportation delay.  If $\delta(t_{r}(l),t_{b}(l))=0$, we have the instantaneous case previously described. 

\item A transport flow from time location $t_{b}(l)$ to time location $t_{r}(l)$ (with $t_r(l)\geq t_b(l)$)  at a fixed spatial  location ($n_b(l)=n_r(l)$) can be used to capture product storage (inventory). In other words, product storage can be seen as a form of temporal product transport. 

\end{itemize}

The above discussion reveals that there are different types of arcs present in the SC (spatial, temporal, and spatio-temporal). To capture this categorization, we define the spatial arc set $\mathcal{A}_{\mathcal{N}}:=\{a\in \mathcal{A}\,|\, t_b(a)=t_r(a)\}\subseteq \mathcal{A}$ (sending and receiving time locations are the same), the temporal arc set $\mathcal{A}_{\mathcal{T}}:=\{a\in \mathcal{A}\,|\, n_b(a)=n_r(a)\}$ (sending and receiving spatial locations are the same), and the space-time arc set $\mathcal{A}_{\mathcal{S}}:=\{a\in \mathcal{A}\,|\, n_b(a)\neq n_r(a)\,\textrm{or}\, t_b(a)\neq t_r(a)\}$.  We thus have that the entire arc set is given by $\mathcal{A}=\mathcal{A}_\mathcal{N}\cup \mathcal{A}_\mathcal{T} \cup \mathcal{A}_\mathcal{S}$. 

One can think of the space-time SC as a time sequence of supply chains that are connected in time via temporal and spatiotemporal arcs. This interpretation also suggests that we can partition the set of suppliers $\mathcal{G}$ into the subsets $\mathcal{G}_t:=\{i\in \mathcal{G}\,|\,t(i)=t\}$ and thus $\mathcal{G}=\cup_{t\in \mathcal{T}}\mathcal{G}_t$; one can follow this same reasoning to partition the set of consumers $\mathcal{D}$ and technology providers $\mathcal{M}$. The set of transport providers $\mathcal{L}$ can be partitioned in a form that is analogous to the arc partitioning (in spatial $\mathcal{L}_\mathcal{N}$, temporal $\mathcal{L}_\mathcal{T}$, and spatiotemporal transporters $\mathcal{L}_\mathcal{S}$). In addition, one can also partition the set $\mathcal{L}$ into subsets of the form $\mathcal{L}_t:=\{l\in\mathcal{L}\,|\,t_b(l)=t\}$ and thus $\mathcal{L}=\displaystyle \bigcup_{t\in \mathcal{T}}\mathcal{L}_t$. 

\begin{figure}[!htb]
	\center{\includegraphics[width=100mm]{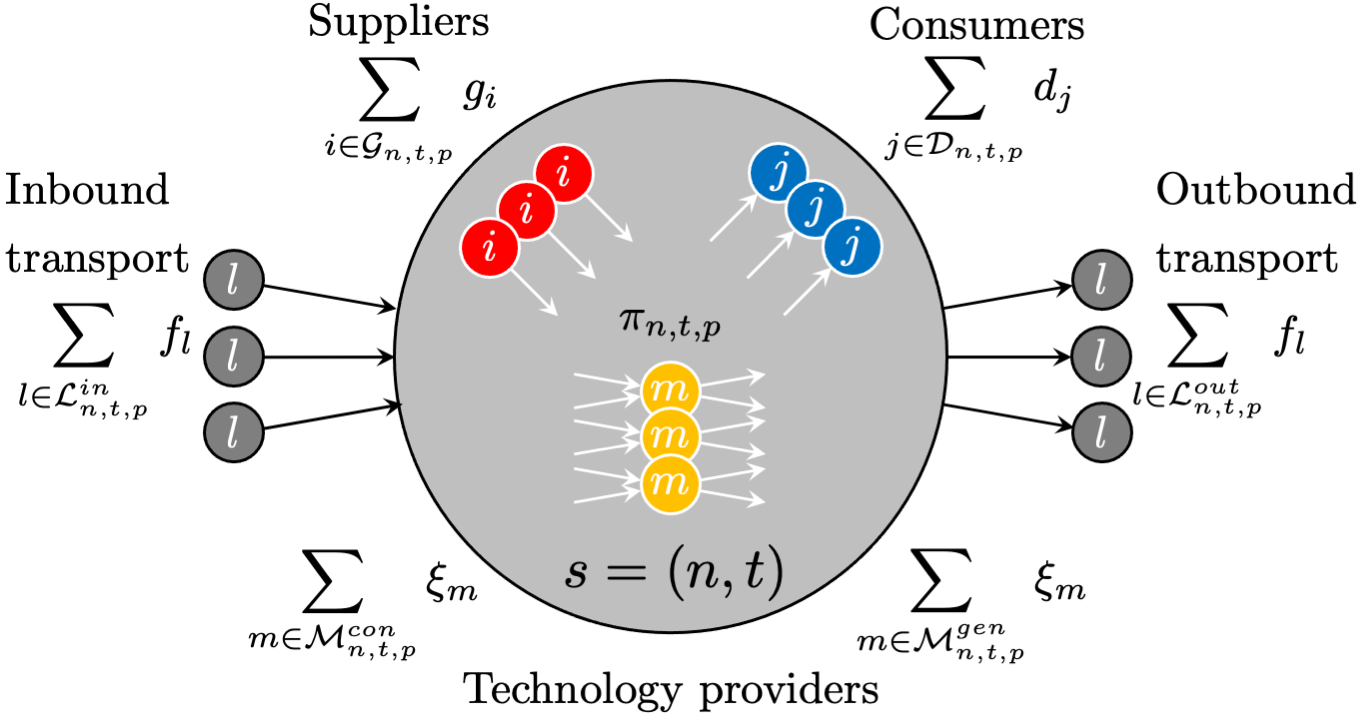}}
	\caption{Schematic of a space-time node $s=(n,t)$ in the SC graph $G(\mathcal{S},\mathcal{A})$. Suppliers ($i\in\mathcal{G}$) consumers ($j\in\mathcal{D}$) and technology providers ($m\in\mathcal{M}$) act on products at a particular node; transportation providers ($l\in\mathcal{L}$) move products across nodes. All the stakeholders influence product prices $\pi_{n,t,p}$ at the space-time node.}
	\label{FigNodeSchematic}
\end{figure}

A special case of the SC-ST model is that in which it is assumed that no temporal and spatiotemporal arcs are present; as such, we have $\mathcal{A}_\mathcal{T}=\emptyset$ and $\mathcal{A}_\mathcal{S}=\emptyset$ and thus $\mathcal{A}=\mathcal{A}_\mathcal{N}$.  We also note that this is equivalent to restricting the flows of the transporters  in the SC-ST model as $f_l=0$ for $l\in \mathcal{A}_{T}\cup \mathcal{A}_{\mathcal{S}}$ (effectively eliminating any arcs that connect nodes across time). In this case, the SC-ST model is simply a time sequence of SC-SS models  that are disconnected in time; this is equivalent to making a quasi steady-state assumption (all transport in time is instantaneous). We thus refer to this special SC-ST model as SC-QSS. The SC-QSS assumption is often made, for instance, if any time delay is much shorter than the time horizon (e.g., time step is in hours while the time horizon spans an entire year). However, our interest in the SC-QSS model arises from the observation that this can be used as a reference model that help us analyze the properties of the SC-ST model. Specifically, we observe that SC-QSS  is a restricted version of SC-ST; this observation will highlight how space-time product transport can be exploited to control space-time price dynamics. 

\subsubsection{SC-ST Formulation (Primal)}

We interpret the SC-ST formulation as a market clearing formulation; this interpretation reveals important and interesting economic properties and behavior of the SC. The formulation is given by: 
\begin{subequations}\label{Primal}
	\begin{equation}\label{PrimalObj}
	\begin{aligned}
	\max_{g,d,f,\xi} \quad \sum_{j \in \mathcal{D}}{\alpha^{d}_{j}}d_{j} -
	\sum_{i \in \mathcal{G}}{\alpha^{g}_{i}}g_{i} -
	\sum_{l\in\mathcal{L}}{\alpha^{f}_{l}f_{l}} - 
	\sum_{m\in\mathcal{M}}{\alpha^{\xi}_{m}\xi_{m}}
	\end{aligned}
	\end{equation}
	\begin{multline}\label{PrimalClearing}
	\textrm{s.t.}\;
	\sum_{i\in\mathcal{G}_{s,p}}{g_{i}} +
	\sum_{l\in\mathcal{L}^{in}_{s,p}}{f_{l}} +
	\sum_{m\in\mathcal{M}_{s,p}^{gen}}{\gamma_{m,p}\xi_{m}} =\\
	\sum_{j\in\mathcal{D}_{s,p}}{d_{j}} +
	\sum_{l\in\mathcal{L}^{out}_{s,p}}{f_{l}} +
	\sum_{m\in\mathcal{M}_{s,p}^{con}}{\gamma_{m,p}\xi_{m}},
	\quad (s,p)\in\mathcal{S}\times\mathcal{P},\; (\pi_{s,p})
	\end{multline}
	\begin{equation}\label{PrimalSup}
	\begin{aligned}
	g_{i}\leq \overline{g}_{i},\quad i\in\mathcal{G},\; (\overline{\lambda}_{i})
	\end{aligned}
	\end{equation}
	\begin{equation}\label{PrimalDem}
	\begin{aligned}
	d_{j}\leq \overline{d}_{j},\quad j\in\mathcal{D},\; (\overline{\lambda}_{j})
	\end{aligned}
	\end{equation}
	\begin{equation}\label{PrimalFlo}
	\begin{aligned}
	f_{l}\leq \overline{f}_{l},\quad l\in\mathcal{L},\; (\overline{\lambda}_{l})
	\end{aligned}
	\end{equation}
	\begin{equation}\label{PrimalTech}
	\begin{aligned}
	\xi_{m}\leq \overline{\xi}_{m},\quad m\in\mathcal{M},\; (\overline{\lambda}_{m})
	\end{aligned}
	\end{equation}
\end{subequations}
We note that this formulation is directly analogous to that of SC-SS formulation \eqref{PrimalSS}; the key difference is that SC-SS uses the spatial node set $\mathcal{N}$, while the SC-ST formulation \eqref{Primal} uses the space-time node set $\mathcal{S}$.  This observation is key to establish economic properties for the SC under a dynamic setting. In the SC-ST formulation, the market clearing constraints capture product balances at different spatial locations and at different times. Here, it is clear that transport of product across space and time is driven by the transport flows. The dual variable of the clearing constraint is denoted $\pi_{s,p}\in\mathbb{R}$;  the optimal value for this variable is the clearing price of product $p$ at space-time node $s=(n,t)$. This reveals that clearing prices exhibit space-time behavior. 

Equations \eqref{PrimalSup} to \eqref{PrimalTech} enforce capacity constraints for each stakeholder. The corresponding dual variables $\overline{\lambda}_{i}\in\mathbb{R}_{+}$, $\overline{\lambda}_{j}\in\mathbb{R}_{+}$, $\overline{\lambda}_{l}\in\mathbb{R}_{+}$, and $\overline{\lambda}_{m}\in\mathbb{R}_{+}$ are highlighted here. \citet{Tominac2020} demonstrate that non-zero lower bounds create artificial incentives that interfere with market clearing prices; as such, we do not allow for non-zero lower bounds on allocations.

If we express the space-time nodes in disaggregated form $s=(n,t)$ and we use set time partitions, we can express the SC-ST model in the equivalent form:

\begin{subequations}\label{PrimalNT}
	\begin{equation}\label{PrimalObjNT}
	\begin{aligned}
	\max_{g,d,f,\xi} \quad \sum_{t\in\mathcal{T}}\left(\sum_{j \in \mathcal{D}_t}{\alpha^{d}_{j}}d_{j} -
	\sum_{i \in \mathcal{G}_t}{\alpha^{g}_{i}}g_{i} -
	\sum_{l\in\mathcal{L}_t}{\alpha^{f}_{l}f_{l}} - 
	\sum_{m\in\mathcal{M}_t}{\alpha^{\xi}_{m}\xi_{m}}\right)
	\end{aligned}
	\end{equation}
	\begin{multline}\label{PrimalClearingNT}
	\textrm{s.t.}\;
	\sum_{i\in\mathcal{G}_{n,t,p}}{g_{i}} +
	\sum_{l\in\mathcal{L}^{in}_{n,t,p}}{f_{l}} +
	\sum_{m\in\mathcal{M}_{n,t,p}^{gen}}{\gamma_{m,p}\xi_{m}} =\\
	\sum_{j\in\mathcal{D}_{n,t,p}}{d_{j}} +
	\sum_{l\in\mathcal{L}^{out}_{n,t,p}}{f_{l}} +
	\sum_{m\in\mathcal{M}_{n,t,p}^{con}}{\gamma_{m,p}\xi_{m}},
	\quad (n,t,p)\in\mathcal{N}\times\mathcal{T}\times\mathcal{P},\; (\pi_{n,t,p})
	\end{multline}
	\begin{equation}\label{PrimalSupNT}
	\begin{aligned}
	g_{i}\leq \overline{g}_{i},\quad i\in\mathcal{G},\; (\overline{\lambda}_{i})
	\end{aligned}
	\end{equation}
	\begin{equation}\label{PrimalDemNT}
	\begin{aligned}
	d_{j}\leq \overline{d}_{j},\quad j\in\mathcal{D},\; (\overline{\lambda}_{j})
	\end{aligned}
	\end{equation}
	\begin{equation}\label{PrimalFloNT}
	\begin{aligned}
	f_{l}\leq \overline{f}_{l},\quad l\in\mathcal{L},\; (\overline{\lambda}_{l})
	\end{aligned}
	\end{equation}
	\begin{equation}\label{PrimalTechNT}
	\begin{aligned}
	\xi_{m}\leq \overline{\xi}_{m},\quad m\in\mathcal{M},\; (\overline{\lambda}_{m})
	\end{aligned}
	\end{equation}
\end{subequations}
This formulation is more verbose but reveals the space-time nature of the problem. Specifically, this reveals that the dynamic SC is indeed a time sequence of SS-SCs that are interconnected via temporal transport flows. Moreover, the total surplus is the summation of the surplus at the different times. 

\subsubsection{SC-ST Formulation (Dual)}\label{SecDual}

The SS-ST formulation can be expressed in the dual form:

\begin{subequations}\label{Dual}
	\begin{equation}\label{DualObj}
	\begin{aligned}
	\min_{\pi,\overline{\lambda}}\; 
	\sum_{i \in \mathcal{G}}{\overline{g}_{i}\overline{\lambda}_{i}} +
	\sum_{j \in \mathcal{D}}{\overline{d}_{j}\overline{\lambda}_{j}} +
	\sum_{l\in\mathcal{L}}{\overline{f}_{l}\overline{\lambda}_{l}} +
	\sum_{m\in\mathcal{M}}{\overline{\xi}_{m}\overline{\lambda}_{m}}
	\end{aligned}
	\end{equation}
	\begin{equation}\label{DualSup}
	\begin{aligned}
	\textrm{s.t.}\;\; \pi_{n(i),t(i),p(i)} - \overline{\lambda}_{i} \leq  \alpha^{g}_{i},\; i\in\mathcal{G}
	\end{aligned}
	\end{equation}
	\begin{equation}\label{DualDem}
	\begin{aligned}
	\pi_{n(j),t(j),p(j)} + \overline{\lambda}_{j} \geq \alpha^{d}_{j},\; j\in\mathcal{D}
	\end{aligned}
	\end{equation}
	\begin{equation}\label{DualFlo}
	\begin{aligned}
	\pi_{n_{r}(l),t_{r}(l),p(l)} - \pi_{n_{b}(l),t_{b}(l),p(l)} - \overline{\lambda}_{l} \leq \alpha^{f}_{l},\; l\in\mathcal{L}
	\end{aligned}
	\end{equation}
	\begin{equation}\label{DualTech}
	\begin{aligned}
	\sum_{p\in\mathcal{P}^{gen}_{m}}{\gamma_{m,p}\pi_{n(m),t(m),p(m)}} - \sum_{p\in\mathcal{P}^{con}_{m}}{\gamma_{m,p}\pi_{n(m),t(m),p(m)}} - \overline{\lambda}_{m} \leq \alpha^{\xi}_{m},\; m\in\mathcal{M}
	\end{aligned}
	\end{equation}
\end{subequations}

This formulation more clearly reveals the relationship between the prices $\pi$ and the capacity dual variables $\overline{\lambda}$ (which can be interpreted as marginal profits). The dual problem also immediately reveals bounding properties for clearing prices. The dual shown in \eqref{DualObj} does not depend on market prices; this only depends on the marginal profit variables $\overline{\lambda}$. Viewed through complementary slackness, the dual formulation minimizes the combined value of stakeholder marginal profits.

Constraints \eqref{DualSup} to \eqref{DualTech} correspond to the primal allocations $(g,d,f,\xi)$ and define spatiotemporal prices associated with each stakeholder class in terms of the clearing prices $\pi_{n,t,p}$. We define the price identities $(\pi_{i},\pi_{j},\pi_{l},\pi_{m})$ in \eqref{PriceIdentities}. The price identities \eqref{PI1} to \eqref{PI5} are interpreted as the supply price, demand price, transportation price, and technology (or transformation) price. These identities demonstrate that supply and demand prices are equivalent to the corresponding nodal product prices; this means that suppliers and consumers pay (or are paid) according to the product value at their space-time location. The transport price is the difference between prices at different space-time nodes. The technology price is a yield-weighted difference between nodal prices of the outputs of a given technology and its inputs. A solution of the dual problem provides these prices, and substitution of these identities into \eqref{Dual} condenses the dual to the form shown in \eqref{DualSub}.

\begin{subequations}\label{PriceIdentities}
	\begin{equation}\label{PI1}
	\begin{aligned}
	\pi_{i} := \pi_{n(i),t(i),p(i)},\; i\in\mathcal{G}
	\end{aligned}
	\end{equation}
	\begin{equation}
	\begin{aligned}
	\pi_{j} := \pi_{n(j),t(j),p(j)},\; j\in\mathcal{D}
	\end{aligned}
	\end{equation}
	\begin{equation}\label{TransportPrice}
	\begin{aligned}
	\pi_{l} := \pi_{n_{r}(l),t_{r}(l),p(l)} - \pi_{n_{b}(l),t_{b}(l),p(l)},\; l\in\mathcal{L}
	\end{aligned}
	\end{equation}
	\begin{equation}\label{PI5}
	\begin{aligned}
	\pi_{m} := \sum_{p\in\mathcal{P}^{gen}_{m}}{\gamma_{m,p}\pi_{n(m),t(m),p(m)}} - \sum_{p\in\mathcal{P}^{con}_{m}}{\gamma_{m,p}\pi_{n(m),t(m),p(m)}},\; m\in\mathcal{M}
	\end{aligned}
	\end{equation}
\end{subequations}

We note that the price identities in \eqref{PriceIdentities} imply additional relationships between the nodal prices and the stakeholder prices, as shown in \eqref{PriceIdentitiesPt2}.

\begin{subequations}\label{PriceIdentitiesPt2}
	\begin{equation}
	\begin{aligned}
	\sum_{n\in\mathcal{N}}\sum_{t\in\mathcal{T}}\sum_{p\in\mathcal{P}}\pi_{n,t,p}\sum_{i\in\mathcal{G}_{n,t,p}}g_{i} = \sum_{i\in\mathcal{G}}\pi_{i}g_{i}
	\end{aligned}
	\end{equation}
	\begin{equation}
	\begin{aligned}
	\sum_{n\in\mathcal{N}}\sum_{t\in\mathcal{T}}\sum_{p\in\mathcal{P}}\pi_{n,t,p}\sum_{j\in\mathcal{D}_{n,t,p}}d_{j} = \sum_{j\in\mathcal{D}}\pi_{j}d_{j}
	\end{aligned}
	\end{equation}
	\begin{equation}
	\begin{aligned}
	\sum_{n\in\mathcal{N}}\sum_{t\in\mathcal{T}}\sum_{p\in\mathcal{P}}\pi_{n,t,p}\left(\sum_{l\in\mathcal{L}^{in}_{n,t,p}}f_{l} - \sum_{l\in\mathcal{L}^{out}_{n,t,p}}f_{l}\right) = \sum_{l\in\mathcal{L}}\pi_{l}f_{l}
	\end{aligned}
	\end{equation}
	\begin{equation}
	\begin{aligned}
	\sum_{n\in\mathcal{N}}\sum_{t\in\mathcal{T}}\sum_{p\in\mathcal{P}}\pi_{n,t,p}\left(\sum_{m\in\mathcal{M}^{gen}_{n,t,p}}\gamma_{m,p}\xi_{m} - \sum_{m\in\mathcal{M}^{con}_{n,t,p}}\gamma_{m,p}\xi_{m}\right) = \sum_{m\in\mathcal{M}}\pi_{m}\xi_{m}
	\end{aligned}
	\end{equation}
\end{subequations}

Substitution of the identities into the dual program yields \eqref{DualSub}; this formulation illustrates price bounding relationships governing the coordination behavior. Dual constraint \eqref{DualSubDem} provides a lower bound on consumer prices $\pi_{j}$, while the remaining dual constraints provide upper bounds on the stakeholder prices $\pi_{i}$, $\pi_{l}$, and $\pi_{m}$ for suppliers, transport providers, and technology providers, respectively. These bounds are important in understanding price behavior (e.g., space-time dynamics) that arise under coordination.

\begin{subequations}\label{DualSub}
	\begin{equation}
	\begin{aligned}
	\min_{\pi,\overline{\lambda}}
	\sum_{i \in \mathcal{G}}{\overline{g}_{i}\overline{\lambda}_{i}} +
	\sum_{j \in \mathcal{D}}{\overline{d}_{j}\overline{\lambda}_{j}} +
	\sum_{l\in\mathcal{L}}{\overline{f}_{l}\overline{\lambda}_{l}} +
	\sum_{m\in\mathcal{M}}{\overline{\xi}_{m}\overline{\lambda}_{m}}
	\end{aligned}
	\end{equation}
	\begin{equation}\label{DualSubSup}
	\begin{aligned}
	\textrm{s.t.}\; \pi_{i} - \overline{\lambda}_{i} \leq  \alpha^{g}_{i},\; i\in\mathcal{G}
	\end{aligned}
	\end{equation}
	\begin{equation}\label{DualSubDem}
	\begin{aligned}
	\pi_{j} + \overline{\lambda}_{j} \geq \alpha^{d}_{j},\; j\in\mathcal{D}
	\end{aligned}
	\end{equation}
	\begin{equation}\label{DualSubFlo}
	\begin{aligned}
	\pi_{l} - \overline{\lambda}_{l} \leq \alpha^{f}_{l},\; l\in\mathcal{L}
	\end{aligned}
	\end{equation}
	\begin{equation}\label{DualSubTech}
	\begin{aligned}
	\pi_{m} - \overline{\lambda}_{m} \leq \alpha^{\xi}_{m},\; m\in\mathcal{M}
	\end{aligned}
	\end{equation}
\end{subequations}

The primal and dual program relationships with individual stakeholder objectives are reproduced in Figure~\ref{FigPrimalDual}, which illustrates how bidding information flows from individual stakeholders to the coordinator. The coordinator determines product allocations and market prices. We will see that market coordination aims to maximize the individual profits of all stakeholders and yields a competitive equilibrium.

\begin{figure}[!htb]
	\center{\includegraphics[width=0.9\textwidth]{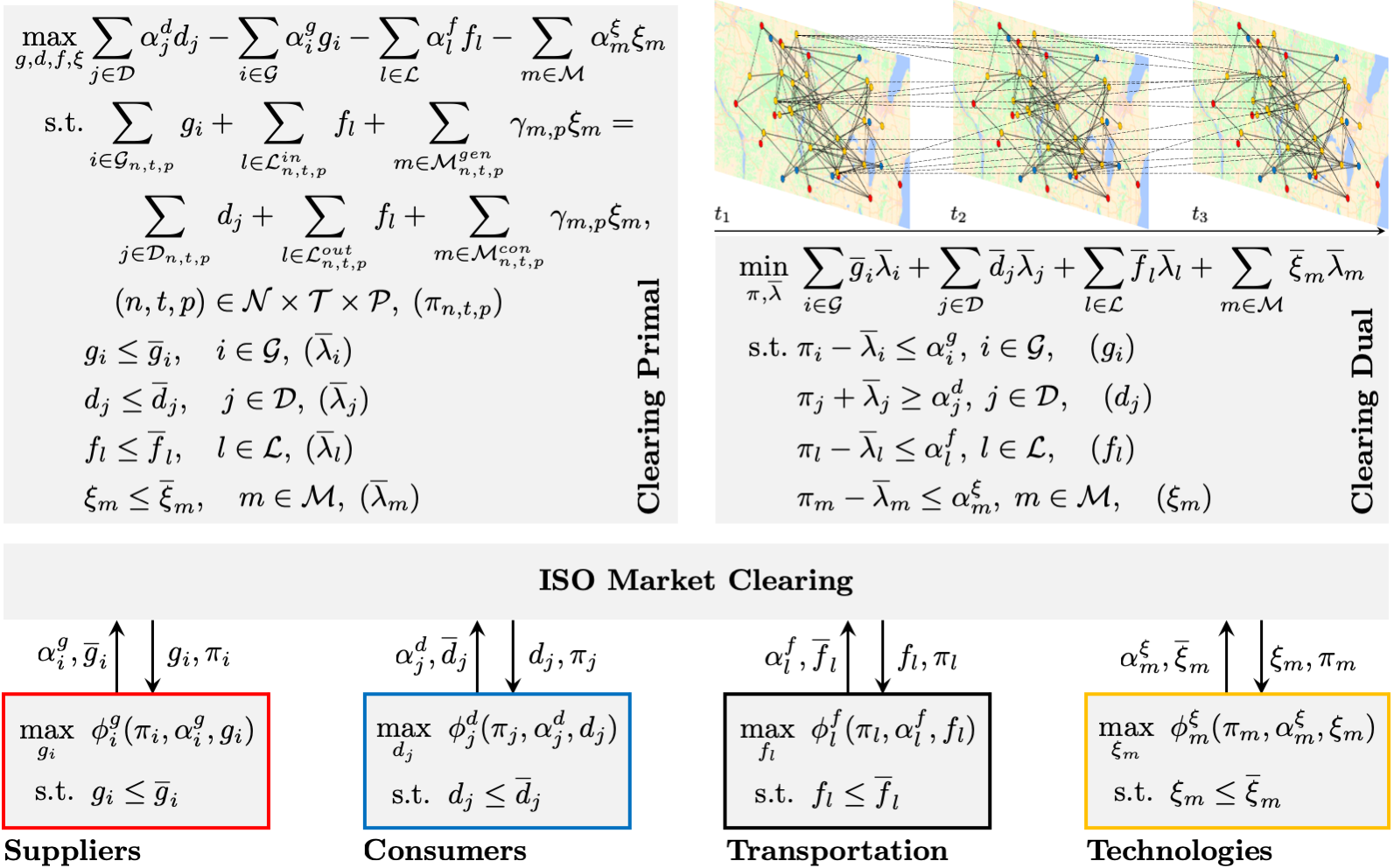}}
	\caption{Primal and dual view of the SC-ST (market clearing) model. The coordinator aims to maximize all the player profits; maximizing each profit subject to the bidding information it receives. The coordinator determines the allocations and market prices that solve the clearing problem.}
	\label{FigPrimalDual}
\end{figure}

\subsubsection{SC-ST Formulation (Lagrangian Dual)}

The Lagrangian dual formulation of SC-ST is:
\begin{equation}\label{LDualRearranged}
\begin{aligned}
\max_{\pi,\overline{\lambda}}\min_{g,d,f,\xi}\mathcal{L}(g,d,f,\xi,\pi,\overline{\lambda})=&
\sum_{i\in\mathcal{G}}-(\pi_{i} -  \alpha^{g}_{i})g_{i} + (g_{i}-\overline{g}_{i})\overline{\lambda}_{i} +\\&
\sum_{j\in\mathcal{D}}-(\alpha^{d}_{j} - \pi_{j})d_{j} + (d_{j}-\overline{d}_{j})\overline{\lambda}_{j} +\\&
\sum_{l\in\mathcal{L}}-(\pi_{l} - \alpha^{f}_{l})f_{l} + (f_{l} - \overline{f}_{l})\overline{\lambda}_{l} +\\&
\sum_{m\in\mathcal{M}}-(\pi_{m} - \alpha^{\xi}_{m})\xi_{m} + (\xi_{m} - \overline{\xi}_{m})\overline{\lambda}_{m}
\end{aligned}
\end{equation}
Here, we denote the Lagrangian function as $\mathcal{L}(g,d,f,\xi,\pi,\overline{\lambda})$ and the Lagrangian dual problem is stated in \eqref{LDualRearranged}. This formulation will reveal that objective function of the clearing problem aims to maximize the stakeholder profits and provides additional price bounding information.  Specifically, we observe that all of the terms of the form $(x_{u} - \overline{x}_{u})\overline{\lambda}_{u}$ are identically zero (either $x_{u} - \overline{x}_{u}=0$ or $\overline{\lambda}_{u}=0$ by complementary slackness). We express the remaining terms to obtain: 
\begin{equation}\label{LDualReduced}
\begin{aligned}
\max_{\pi,\overline{\lambda}}\min_{g,d,f,\xi}\mathcal{L}(g,d,v,f,\xi,\pi,\overline{\lambda})=&
\sum_{i\in\mathcal{g}}-(\pi_{i} -  \alpha^{g}_{i})g_{i} +
\sum_{j\in\mathcal{D}}-(\alpha^{d}_{j} - \pi_{j})d_{j} +\\&
\sum_{l\in\mathcal{L}}-(\pi_{l} - \alpha^{f}_{l})f_{l} +
\sum_{m\in\mathcal{M}}-(\pi_{m} - \alpha^{\xi}_{m})\xi_{m}
\end{aligned}
\end{equation}
We define the profit allocated to a stakeholder by the market as the difference between its revenue and costs. We assume that stakeholders bid their marginal value for a product (their operating costs); this assumption is consistent with the bidding outcomes in a Vickrey-Clarke-Groves auction, in which bidders are incentivized to bid their marginal value \citep{Vickrey1961,Clarke1971,Groves1973}. We denote the vector of stakeholder profits $\phi=(\phi^{g},\phi^{d},\phi^{f},\phi^{\xi})$, where individual stakeholder profits are:
\begin{subequations}\label{ProfitIdentities}
	\begin{equation}
	\begin{aligned}
	\phi^{g}_{i} := (\pi_{i} -  \alpha^{g}_{i})g_{i},\; i\in\mathcal{G}
	\end{aligned}
	\end{equation}
	\begin{equation}
	\begin{aligned}
	\phi^{d}_{j} := (\alpha^{d}_{j} - \pi_{j})d_{j},\; j\in\mathcal{D}
	\end{aligned}
	\end{equation}
	\begin{equation}
	\begin{aligned}
	\phi^{f}_{l} := (\pi_{l} -  \alpha^{f}_{l})f_{l},\; l\in\mathcal{L}
	\end{aligned}
	\end{equation}
	\begin{equation}
	\begin{aligned}
	\phi^{\xi}_{m} := (\pi_{m} -  \alpha^{\xi}_{m})\xi_{m},\; m\in\mathcal{M}
	\end{aligned}
	\end{equation}
\end{subequations}
We note that consumer profits are defined differently from those of service providers. Consumer profit is to be interpreted as money saved; the difference between the consumer bid (consumer willingness to pay) and the market price (that is paid) for the consumer allocation. This difference in the formulation emerges from the expectation that consumers will be revenue sources and service providers will be revenue sinks. These identities are logically consistent for those with negative bids (i.e., $i\in\mathcal{G}^{-}$ and $j\in\mathcal{D}^{-}$).  

\begin{equation}\label{LDual}
\begin{aligned}
\max_{\pi,\overline{\lambda}}\min_{g,d,f,\xi}\mathcal{L}(g,d,f,\xi,\pi,\overline{\lambda})=
-\left(\sum_{i\in\mathcal{G}}\phi^{g}_{i}+
\sum_{j\in\mathcal{D}}\phi^{d}_{j}+
\sum_{l\in\mathcal{L}}\phi^{f}_{l}+
\sum_{m\in\mathcal{M}}\phi^{\xi}_{m}\right)
\end{aligned}
\end{equation}
Substitution of the profit identities into \eqref{LDualReduced} results in \eqref{LDual}, from which we conclude that the Lagrangian dual maximizes the sum of the stakeholder profits.

\section{Economic Properties of SC-ST Model}\label{sec:theorems}

In this section we establish economic properties of the SC; these properties leverage the clearing interpretation of the SC-ST formulation. We begin by establishing that coordination maximizes stakeholder profits and that the stakeholder profits are all nonnegative (regardless of the market outcome).

\begin{theorem}\label{ThmProfit}
	The SC-ST formulation delivers prices $\pi$ and allocations $(g,d,f,\xi)$ that maximize the collective stakeholder profit; moreover, the profits are all nonnegative.
\end{theorem}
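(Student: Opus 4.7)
The plan is to treat the theorem as two separate claims: (a) the primal objective \eqref{PrimalObj} coincides, at any primal-feasible allocation, with the aggregated stakeholder profit $\sum_{i} \phi^{g}_{i}+\sum_{j}\phi^{d}_{j}+\sum_{l}\phi^{f}_{l}+\sum_{m}\phi^{\xi}_{m}$, so that optimization of \eqref{Primal} simultaneously maximizes collective profit; and (b) each individual profit term is nonnegative at the clearing solution. Claim (a) already follows implicitly from the Lagrangian derivation \eqref{LDual}, but I would reprove it directly to make the role of the clearing constraint explicit and to avoid invoking strong duality.

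For claim (a), the plan is to expand the sum of profits using the identities \eqref{ProfitIdentities}. The terms involving bid costs $\alpha^{g},\alpha^{d},\alpha^{f},\alpha^{\xi}$ reassemble into the surplus objective \eqref{PrimalObj}, leaving the collection of price cross terms $\sum_{i}\pi_{i}g_{i}-\sum_{j}\pi_{j}d_{j}+\sum_{l}\pi_{l}f_{l}+\sum_{m}\pi_{m}\xi_{m}$. Applying the bookkeeping identities \eqref{PriceIdentitiesPt2}, this expression rewrites as a sum over $(s,p)$ of $\pi_{s,p}$ multiplied by the difference between the left-hand and right-hand sides of the clearing constraint \eqref{PrimalClearing}. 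Since clearing holds at every primal-feasible point, each such difference is zero, so the $\pi$ cross terms cancel identically, independently of the particular price vector used. Hence the primal objective equals the collective profit at any feasible allocation, and maximizing one maximizes the other.

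For claim (b), the plan is complementary slackness between the primal \eqref{Primal} and dual \eqref{DualSub}. Take a supplier $i$ first: either $g_{i}=0$, in which case $\phi^{g}_{i}=0$; or $g_{i}>0$, in which case the associated dual constraint \eqref{DualSubSup} is active, giving $\pi_{i}-\alpha^{g}_{i}=\overline{\lambda}_{i}\geq 0$, and therefore $\phi^{g}_{i}=\overline{\lambda}_{i}g_{i}\geq 0$. The same dichotomy applied to consumers through \eqref{DualSubDem}, transport providers through \eqref{DualSubFlo}, and technology providers through \eqref{DualSubTech} yields $\phi^{d}_{j},\phi^{f}_{l},\phi^{\xi}_{m}\geq 0$. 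Summing, the collective profit is nonnegative as well, consistent with the sign in \eqref{LDual}.

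The main obstacle I anticipate is bookkeeping rather than mathematics: matching the direction of the dual inequalities with the sign with which each stakeholder enters the primal objective (consumers contribute revenue while the rest contribute cost), and confirming that the aggregated transport term $\sum_{l}\pi_{l}f_{l}$ and transformation term $\sum_{m}\pi_{m}\xi_{m}$ pair cleanly with the in/out and gen/con partitions in \eqref{PriceIdentitiesPt2} so that the clearing residual is what remains. Once those ledgers are kept straight, the cancellation in (a) is immediate and the complementary-slackness conclusions in (b) are one line each.
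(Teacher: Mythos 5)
Your proof is correct, but it takes a different route from the paper's. The paper argues entirely through the Lagrangian dual \eqref{LDual}: for arbitrary fixed prices the zero allocation gives a Lagrangian value of zero, so the Lagrangian minimizer attains a value $\leq 0$; since the Lagrangian is the negative of the aggregate profit and is separable across stakeholders (each term depends on a single allocation variable that could individually be set to zero), every stakeholder's profit at the minimizer is nonnegative. You instead (a) prove the identity ``primal surplus $=$ collective profit'' directly by expanding \eqref{ProfitIdentities} and cancelling the $\pi$ cross terms against the clearing residual via \eqref{PriceIdentitiesPt2} --- which is in fact the same computation the paper defers to the proof of Theorem~\ref{ThmPrices} (equations \eqref{RA1}--\eqref{RA2}) --- and (b) obtain individual nonnegativity from complementary slackness: $g_{i}>0$ forces \eqref{DualSubSup} to be active, so $\pi_{i}-\alpha^{g}_{i}=\overline{\lambda}_{i}\geq 0$ and $\phi^{g}_{i}=\overline{\lambda}_{i}g_{i}\geq 0$, and likewise for the other classes. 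Both arguments are sound. Your version buys something concrete: part (a) makes explicit that the equivalence of surplus and collective profit holds at \emph{every} feasible allocation and for \emph{any} price vector (the cancellation uses only the clearing constraint, not optimality or strong duality), which the paper's Lagrangian route leaves implicit; and part (b) additionally identifies the exact value $\phi^{g}_{i}=\overline{\lambda}_{i}g_{i}$, anticipating the profit bounds of Theorem~\ref{ThmProfitBounds}. What the paper's route buys is that it needs no reference to the clearing constraints at all for the nonnegativity claim --- it works for arbitrary prices, not just the optimal duals --- and that it packages the separability argument once rather than class by class. The only bookkeeping point to be careful about in your write-up is the one you already flagged: the consumer profit enters with the opposite sign convention, so the tight dual constraint \eqref{DualSubDem} yields $\alpha^{d}_{j}-\pi_{j}=\overline{\lambda}_{j}\geq 0$ rather than an upper bound; you handle this correctly.
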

\begin{proof}

For an arbitrary set of prices $\pi$ (and the associated $\overline{\lambda}$) the allocations $(g,d,f,\xi)=(0,0,0,0)$ result in a value of the Lagrangian function \eqref{LDual} of zero, i.e., $\mathcal{L}(0,0,0,0,\pi,\overline{\lambda})=0$. Solving the Lagrangian problem produces an allocation $(g^{*},d^{*},f^{*},\xi^{*})$ minimizing the Lagrangian with
\begin{align*}
\mathcal{L}(g^{*},d^{*},f^{*},\xi^{*},\pi,\overline{\lambda})\leq 0. 
\end{align*}
Under fixed prices, the Lagrangian is the sum of player profits and thus the allocation $(g^{*},d^{*},f^{*},\xi^{*})$ results in profits that are no worse than $(0,0,0,0)$. It thus follows that the profits $(\phi^{g},\phi^{d},\phi^{f},\phi^{\xi})$ are nonnegative. Since this is true for arbitrary prices, then it holds for the optimal prices $\pi^{*}$, and that profits are nonnegative for optimal allocations and prices.
\end{proof}

Theorem \ref{ThmProfit} provides the groundwork necessary to establish that a solution of the SC-ST formulation is better than the solution of the SC-QSS problem (in terms of total surplus). Here, we define the optimal total surplus of SC-ST as $\varphi^*$ and the optimal total surplus of SC-QSS as $\hat{\varphi}$. 

\begin{theorem}\label{ThmSocialWelfare}
	The optimal total surplus of SC-ST $\varphi^*$  and of SC-QSS $\hat{\varphi}$ satisfy $\varphi^*\geq \hat{\varphi}$. 
\end{theorem}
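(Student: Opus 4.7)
The plan is to prove the inequality by exhibiting a feasibility inclusion: every feasible point of the SC-QSS problem can be lifted to a feasible point of the SC-ST problem with an identical objective value, and since SC-ST is a maximization problem, its optimum cannot be smaller.

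First I would recall the characterization given in the paper: SC-QSS is precisely the SC-ST model in which all temporal and spatio-temporal transport flows are forced to zero, i.e.\ $f_l = 0$ for every $l$ whose associated arc lies in $\mathcal{A}_\mathcal{T} \cup \mathcal{A}_\mathcal{S}$. Equivalently, SC-QSS adds the extra equality constraints $f_l = 0$ for $l \in \mathcal{L}_\mathcal{T}\cup \mathcal{L}_\mathcal{S}$ on top of the SC-ST formulation \eqref{Primal}. Let $(\hat g,\hat d,\hat f,\hat \xi)$ denote an optimal solution of SC-QSS with objective $\hat\varphi$.

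The key step is to verify that $(\hat g,\hat d,\hat f,\hat \xi)$, interpreted as a vector of SC-ST decision variables (with $\hat f_l = 0$ on the inter-temporal arcs by construction), satisfies all constraints of SC-ST. The capacity bounds \eqref{PrimalSup}--\eqref{PrimalTech} carry over immediately since they are imposed on the same variables with identical right-hand sides. For the clearing constraints \eqref{PrimalClearing}, since $\hat f_l=0$ for $l\in\mathcal{L}_\mathcal{T}\cup\mathcal{L}_\mathcal{S}$, the inbound/outbound transport summations at each space-time node $s=(n,t)$ collapse to the purely spatial contributions, which is exactly the clearing balance that SC-QSS enforces separately at each time $t$; thus the space-time balance at $s=(n,t)$ holds by virtue of the spatial balance holding at time $t$. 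Hence $(\hat g,\hat d,\hat f,\hat \xi)$ is feasible for SC-ST.

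Finally, since the objectives \eqref{PrimalObj} and the SC-QSS objective are algebraically the same expression in $(g,d,f,\xi)$ (the SC-QSS objective is simply the restriction in which transport costs on zero-flow inter-temporal arcs contribute nothing), the SC-ST value at $(\hat g,\hat d,\hat f,\hat \xi)$ equals $\hat\varphi$. Therefore $\varphi^*\geq \hat\varphi$, which is what we wanted. I do not anticipate any real obstacle: the only thing worth being careful about is making the feasibility inclusion explicit, since the two formulations are written over slightly different node sets ($\mathcal{N}\times\mathcal{T}$ versus $\mathcal{S}$), and one must check that the collapse of the clearing constraints under $\hat f = 0$ on inter-temporal arcs is exact rather than approximate.
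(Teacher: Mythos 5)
Your proposal is correct and takes essentially the same approach as the paper: the paper's proof likewise observes that SC-QSS is SC-ST with the additional constraints $f_l=0$ on temporal and spatio-temporal arcs, so its feasible region is contained in that of SC-ST and the inequality follows. Your version just spells out the feasibility inclusion and the agreement of objective values more explicitly than the paper does.
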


\begin{proof}
The SC-QSS problem can be obtained from SC-ST by imposing the flow constraints $f_l=0$ for $l\in \mathcal{A}_{T}\cup \mathcal{A}_{\mathcal{S}}$. As such, the feasible region of SC-QSS is contained in the that of SC-ST. The result follows. 
\end{proof}

This result is important because it highlights that temporal transport of product can {\em add flexibility} to improve the total surplus.  We next establish that the solution of SC-ST gives a competitive equilibrium. 

\begin{theorem}\label{ThmCompetition}
	A solution of SC-ST delivers prices $\pi$ and allocations $(g,d,f,\xi)$ that represent a competitive equilibrium.
\end{theorem}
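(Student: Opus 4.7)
The plan is to verify that the primal--dual solution $(g^*,d^*,f^*,\xi^*,\pi^*,\overline{\lambda}^*)$ of the SC-ST formulation simultaneously (i) clears the market and (ii) solves every individual stakeholder's profit maximization problem when the clearing prices $\pi^*$ are taken as given; together, these are precisely what constitutes a competitive equilibrium. Ingredient (i) is immediate from primal feasibility \eqref{PrimalClearing}. The real content lies in establishing (ii).

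First I would write the individual profit-maximization problem faced by each stakeholder as a price-taker. For a supplier $i\in\mathcal{G}$ the problem is
\begin{equation*}
\max_{g_i \geq 0} \; (\pi^*_i - \alpha^g_i)\,g_i \quad \textrm{s.t.} \quad g_i \leq \overline{g}_i,
\end{equation*}
and analogous scalar linear programs apply to consumers, transport providers, and technology providers, with objectives $(\alpha^d_j-\pi^*_j)d_j$, $(\pi^*_l-\alpha^f_l)f_l$, and $(\pi^*_m-\alpha^\xi_m)\xi_m$, respectively, using the stakeholder price identities \eqref{PriceIdentities}.

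Next I would invoke the Lagrangian dual form \eqref{LDual}. When prices are frozen at $\pi^*$ (with the corresponding multipliers $\overline{\lambda}^*$), the Lagrangian decomposes additively across stakeholders, so the inner minimization decouples into exactly the individual problems described above (up to a sign convention). By strong duality for the SC-ST linear program, the primal optimum $(g^*,d^*,f^*,\xi^*)$ attains the inner minimum; because the inner problem separates, each stakeholder's component of the optimum is individually optimal given $\pi^*$. This gives ingredient (ii).

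Finally, I would cross-check the scalar LP optimality conditions against the SC-ST dual feasibility constraints \eqref{DualSubSup}--\eqref{DualSubTech} combined with complementary slackness on $\overline{\lambda}^*$; this recovers the expected behavior that profitable stakeholders saturate their capacity bounds, loss-making ones select zero allocation, and marginally priced stakeholders are indifferent. I do not anticipate a major obstacle; the only place that demands care is keeping track of the yield-weighted nodal price terms that define the technology-provider profit $\pi^*_m$, since these couple several entries of $\pi^*$ simultaneously through \eqref{PI5}. Theorem \ref{ThmProfit} then guarantees that the equilibrium profits are nonnegative, confirming that no stakeholder has incentive to deviate from participation.
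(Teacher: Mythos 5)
Your argument is correct and follows essentially the same route as the paper: both rest on the observation that the Lagrangian \eqref{LDual} separates into individual stakeholder profit terms, so that the saddle-point/strong-duality property of the linear clearing program makes the primal-optimal allocation simultaneously clearing-feasible and individually profit-maximizing at the prices $\pi^*$. Your version is somewhat more explicit (writing out the scalar price-taker problems and the complementary-slackness cross-check), but it is the same proof.
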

\begin{proof}
	It is sufficient to show that the solution of SC-ST maximizes profits subject to the clearing constraints. For arbitrary prices $\pi$ solving the Lagrangian \eqref{LDual} produces allocations $(g^{*},d^{*},f^{*},\xi^{*})$ maximizing stakeholder profits (independently). The market clearing program is linear, and strong duality implies that the allocations $(g^{*},d^{*},f^{*},\xi^{*})$ also satisfy the clearing constraints in the primal program \eqref{Primal}. 
\end{proof}

Coordination should yield consistent properties with respect to product prices and revenue streams. Importantly, SC-ST must satisfies a condition termed \textit{revenue adequacy}, which states that revenue streams collected from revenue sources is sufficient to remunerate revenue sinks.
\\

\begin{theorem}\label{ThmPrices}
	A solution of SC-ST delivers clearing prices $\pi$ and allocations $(g,d,f,\xi)$ that satisfy revenue adequacy: 
	\begin{equation}\label{RevenueAdequacy}
	\begin{aligned}
	\sum_{j\in\mathcal{D}^{+}}\pi_{j}d_{j} + \sum_{i\in\mathcal{G}^{-}}\pi_{i}g_{i} =
	\sum_{j\in\mathcal{D}^{-}}\pi_{j}d_{j} + \sum_{i\in\mathcal{G}^{+}}\pi_{i}g_{i} + \sum_{l\in\mathcal{L}}\pi_{l}f_{l} + \sum_{m\in\mathcal{M}}\pi_{m}\xi_{m}
	\end{aligned}
	\end{equation}
\end{theorem}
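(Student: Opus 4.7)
The plan is to derive the revenue-adequacy identity \eqref{RevenueAdequacy} as a weighted sum of the nodal clearing constraints \eqref{PrimalClearingNT}. Concretely, I multiply the clearing equality at each triple $(n,t,p)\in\mathcal{N}\times\mathcal{T}\times\mathcal{P}$ by its dual variable $\pi_{n,t,p}$ and sum over all such triples. Since the clearing constraints hold with equality at every primal feasible point, and in particular at the optimum, this produces an identity relating the $\pi$-weighted sums of the supply, consumer, inbound/outbound transport, and generated/consumed technology flows evaluated at the optimum produced by SC-ST. No dual-feasibility or complementarity information is needed beyond the fact that the clearing constraint is an equality; this is important because the prices $\pi_{n,t,p}$ are unrestricted in sign.

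The next step is to collapse each of those six triple sums into a sum indexed by stakeholders, which is exactly what the four identities in \eqref{PriceIdentitiesPt2} accomplish. The supplier and consumer sums collapse by direct relabeling into $\sum_{i\in\mathcal{G}}\pi_i g_i$ and $\sum_{j\in\mathcal{D}}\pi_j d_j$. The transport sum is the mildly interesting case: each arc $l$ contributes $\pi_{n_r(l),t_r(l),p(l)}f_l$ at its head node (through $\mathcal{L}^{in}$) and $\pi_{n_b(l),t_b(l),p(l)}f_l$ at its tail node (through $\mathcal{L}^{out}$), and the difference of these contributions is precisely the transport price $\pi_l$ defined in \eqref{TransportPrice}, giving $\sum_{l\in\mathcal{L}}\pi_l f_l$. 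The technology sum collapses analogously, via the yield-weighted identity \eqref{PI5}, into $\sum_{m\in\mathcal{M}}\pi_m\xi_m$. Moving outbound and consumption terms across the equality then yields the signed-price balance
\begin{equation*}
\sum_{j\in\mathcal{D}}\pi_j d_j \;=\; \sum_{i\in\mathcal{G}}\pi_i g_i + \sum_{l\in\mathcal{L}}\pi_l f_l + \sum_{m\in\mathcal{M}}\pi_m \xi_m.
\end{equation*}

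The final step is bookkeeping: partition $\mathcal{D}=\mathcal{D}^{+}\cup\mathcal{D}^{-}$ and $\mathcal{G}=\mathcal{G}^{+}\cup\mathcal{G}^{-}$ according to the bid-sign convention introduced in Section~\ref{sec:formula}, then redistribute the resulting four sub-sums across the equality so that the revenue-source stakeholders (the $\mathcal{D}^{+}$ consumers paying into the market and the $\mathcal{G}^{-}$ suppliers paying tipping fees) sit on the left and the revenue-sink stakeholders ($\mathcal{D}^{-}$, $\mathcal{G}^{+}$, $\mathcal{L}$, and $\mathcal{M}$) sit on the right. The only genuinely nontrivial step is the transport/technology collapse inside \eqref{PriceIdentitiesPt2}, since these require unfolding each arc's two-sided contribution and (for technologies) distributing the yield coefficients across generated and consumed products; because those identities are already recorded, I expect the main obstacle to be purely notational bookkeeping—tracking signs consistently when partitioning by bid sign and moving each term to the side of \eqref{RevenueAdequacy} on which it appears in the theorem statement.
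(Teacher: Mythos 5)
Your proposal is correct and takes essentially the same route as the paper's proof: multiply each clearing constraint \eqref{PrimalClearing} by its nodal price $\pi_{n,t,p}$, sum over $(n,t,p)$, collapse the six flow sums into stakeholder-indexed sums via the identities \eqref{PriceIdentitiesPt2}, and regroup suppliers and consumers by bid sign. The paper's argument is exactly this (its equations \eqref{RA1}--\eqref{RA2}), so no further comparison is needed.
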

\begin{proof}
	Consider the market clearing constraint \eqref{PrimalClearing}. We obtain all spatiotemporal revenue streams by multiplying the product allocations at each node and time point by their corresponding dual prices $\pi_{n,t,p}$, and adding over the node, time, and product dimensions in \eqref{RA1}
	
	\begin{equation}\label{RA1}
	\begin{aligned}
	\sum_{n\in\mathcal{N}}\sum_{t\in\mathcal{T}}\sum_{p\in\mathcal{P}}\pi_{n,t,p}
	\left(
	\sum_{i\in\mathcal{G}_{n,t,p}}{g_{i}} +
	\sum_{l\in\mathcal{L}^{in}_{n,t,p}}{f_{l}} +
	\sum_{m\in\mathcal{M}_{n,t,p}^{gen}}{\gamma_{m,p}\xi_{m}} -\right.\\\left.
	\sum_{j\in\mathcal{D}_{n,t,p}}{d_{j}} -
	\sum_{l\in\mathcal{L}^{out}_{n,t,p}}{f_{l}} -
	\sum_{m\in\mathcal{M}_{n,t,p}^{con}}{\gamma_{m,p}\xi_{m}}
	\right)
	\end{aligned}
	\end{equation}
	
	Applying the price identities in \eqref{PriceIdentitiesPt2} results in: 
	\begin{equation}\label{RA2}
	\begin{aligned}
	 \sum_{i\in\mathcal{G}}\pi_{i}g_{i} + \sum_{l\in\mathcal{L}}\pi_{l}f_{l} + \sum_{m\in\mathcal{M}}\pi_{m}\xi_{m} - \sum_{j\in\mathcal{D}}\pi_{j}d_{j}=0
	\end{aligned}
	\end{equation}
	This is identical to \eqref{RevenueAdequacy} (with suppliers and consumers grouped by their bid signs).
\end{proof}

An important implication of revenue adequacy is that coordination does not introduce inefficiencies into the market. Moreover, revenue adequacy implies that revenue streams may flow both forward and backward through time. One way of interpreting this result is that the promise of future payment creates incentives to move products to future time periods. Revenue adequacy is sometimes called \textit{cost recovery} in electricity markets, and provides electricity buyers and sellers with theoretical guarantees of price behavior and confidence in the competitiveness of market outcomes.  The derived revenue adequacy result provides a compact and intuitive view on how economic value (revenue) is preserved in space-time (economic value is conserved). 

\begin{theorem}\label{ThmLowerBounds}
	The clearing prices $(\pi_{i},\pi_{j},\pi_{l},\pi_{m})$ corresponding to the cleared players $(\mathcal{G}^{*},\mathcal{D}^{*},\mathcal{L}^{*},\mathcal{M}^{*})$ satisfy the bounds $\pi_{i}\geq\alpha^{g}_{i},\;i\in\mathcal{G}^{*}$, $\pi_{j}\leq\alpha^{d}_{j},\;j\in\mathcal{D}^{*}$, $\pi_{l}\geq\alpha^{f}_{l},\;l\in\mathcal{L}^{*}$, and $\pi_{m}\geq\alpha^{\xi}_{m},\;m\in\mathcal{M}^{*}$.
\end{theorem}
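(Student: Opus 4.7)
The plan is to derive all four inequalities as immediate one-line consequences of Theorem \ref{ThmProfit} (non-negativity of stakeholder profits), using the fact that membership in the cleared sets $\mathcal{G}^{*},\mathcal{D}^{*},\mathcal{L}^{*},\mathcal{M}^{*}$ means the corresponding allocation is strictly positive. The heart of the argument is just that for any non-negative product $ab \geq 0$ with $a > 0$, one may divide to conclude $b \geq 0$.

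First, for a cleared supplier $i \in \mathcal{G}^{*}$, I would invoke the profit identity $\phi^{g}_{i} = (\pi_{i} - \alpha^{g}_{i}) g_{i}$ from \eqref{ProfitIdentities} together with $\phi^{g}_{i} \geq 0$ (Theorem \ref{ThmProfit}). Since $g_{i} > 0$ by definition of $\mathcal{G}^{*}$, dividing gives $\pi_{i} \geq \alpha^{g}_{i}$. The identical argument applies to transport providers $l \in \mathcal{L}^{*}$ (using $\phi^{f}_{l} = (\pi_{l} - \alpha^{f}_{l})f_{l}$ and $f_{l} > 0$) and to technology providers $m \in \mathcal{M}^{*}$ (using $\phi^{\xi}_{m} = (\pi_{m} - \alpha^{\xi}_{m})\xi_{m}$ and $\xi_{m} > 0$). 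For a cleared consumer $j \in \mathcal{D}^{*}$, the profit identity is $\phi^{d}_{j} = (\alpha^{d}_{j} - \pi_{j}) d_{j}$, with the sign convention flipped (consumer profit is savings, not margin); dividing by $d_{j} > 0$ now yields the reversed inequality $\pi_{j} \leq \alpha^{d}_{j}$, accounting for the single upper bound in the statement.

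As an alternative derivation (useful both as a cross-check and as a way to see tightness), I would note that the same bounds follow from complementary slackness on the dual constraints \eqref{DualSubSup}--\eqref{DualSubTech}. For example, $g_{i} > 0$ together with \eqref{DualSubSup} forces $\pi_{i} - \overline{\lambda}_{i} = \alpha^{g}_{i}$, and since $\overline{\lambda}_{i} \geq 0$ (being the multiplier of a $\leq$ constraint in a maximization) we recover $\pi_{i} \geq \alpha^{g}_{i}$ with equality precisely when the supplier is not at capacity. The other three cases follow in the same manner from \eqref{DualSubDem}--\eqref{DualSubTech}.

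I do not anticipate any serious obstacle: the proof is essentially a corollary of Theorem \ref{ThmProfit} once one notes that ``cleared'' means a strictly positive allocation. The only place requiring care is the sign asymmetry for consumers versus service providers, which must be traced back to the convention in \eqref{ProfitIdentities} that consumer profit is measured as willingness-to-pay minus clearing price (savings), whereas supplier, transport, and technology profits are measured as clearing price minus bid (margin). Once that convention is respected, all four bounds arise uniformly from $\phi \geq 0$ and positivity of the cleared allocation.
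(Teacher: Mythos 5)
Your argument is correct and follows the paper's own proof essentially verbatim: both invoke Theorem \ref{ThmProfit} for nonnegativity of profits, combine it with the profit identities \eqref{ProfitIdentities} and the strict positivity of cleared allocations, and note the flipped sign convention for consumers. The additional complementary-slackness derivation you sketch is a valid cross-check but not needed; the main route matches the paper exactly.
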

\begin{proof}
	Theorem \ref{ThmProfit} indicates that profits $(\phi^{g},\phi^{d},\phi^{f},\phi^{\xi})$ are nonnegative. The allocations $(g,d,f,\xi)$ are nonnegative by definition, and strictly positive for cleared stakeholders. Together with the profit identities \eqref{ProfitIdentities} this implies that $\pi_{i}-\alpha^{g}_{i}\geq0,\;i\in\mathcal{G}^{*}$, $\alpha^{d}_{j}-\pi_{j}\geq0,\;j\in\mathcal{D}^{*}$, $\pi_{l}-\alpha^{f}_{l}\geq0,\;l\in\mathcal{L}^{*}$, $\pi_{m}-\alpha^{\xi}_{m}\geq0,\;m\in\mathcal{M}^{*}$.
\end{proof}

Theorems \ref{ThmProfit}, \ref{ThmCompetition},\ref{ThmPrices}, and \ref{ThmLowerBounds} were established for SC-SS \citep{Sampat2019waste}; here, we prove that these properties hold in a dynamical setting. Notably, this generalization is quite straightforward by using the concept of space-time transport flows. 

In addition to the bounds in Theorem \ref{ThmLowerBounds} (emerging from the Lagrangian dual),  we are able to establish the following price bounding behavior from the dual program.

\begin{theorem}\label{ThmUpperBounds}
	The clearing prices $(\pi_{i},\pi_{j},\pi_{l},\pi_{m})$ satisfy the bounds $\pi_{i} - \overline{\lambda}_{i} \leq  \alpha^{g}_{i},\; i\in\mathcal{G}$, $\pi_{j} + \overline{\lambda}_{j} \geq \alpha^{d}_{j},\;j\in\mathcal{D}$, $\pi_{l} - \overline{\lambda}_{l} \leq \alpha^{f}_{l},\;l\in\mathcal{L}$, and $\pi_{m} - \overline{\lambda}_{m} \leq \alpha^{\xi}_{m},\;m\in\mathcal{M}$.
\end{theorem}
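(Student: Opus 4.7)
The plan is to observe that the four asserted inequalities are, up to notation, exactly the dual feasibility constraints \eqref{DualSubSup}--\eqref{DualSubTech} of the condensed dual program \eqref{DualSub}. So the theorem will follow by invoking strong LP duality for the SC-ST clearing problem and then applying the stakeholder price identities \eqref{PI1}--\eqref{PI5}.

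First I would note that the primal SC-ST program \eqref{Primal} is a linear program with a nonempty feasible region (for instance, $(g,d,f,\xi)=(0,0,0,0)$ satisfies \eqref{PrimalClearing}--\eqref{PrimalTech}) and a bounded optimum (thanks to the capacity bounds \eqref{PrimalSup}--\eqref{PrimalTech}), so strong LP duality applies. Consequently any optimal dual multipliers $(\pi,\overline{\lambda})$ associated with an optimal primal allocation are dual feasible, meaning they satisfy \eqref{DualSup}--\eqref{DualTech}. The capacity multipliers $\overline{\lambda}_i,\overline{\lambda}_j,\overline{\lambda}_l,\overline{\lambda}_m$ are nonnegative by sign convention on the inequality constraints.

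Next I would substitute the price identities \eqref{PI1}--\eqref{PI5} into each of the four dual feasibility constraints. Constraint \eqref{DualSup} becomes $\pi_i-\overline{\lambda}_i\leq \alpha^g_i$ for all $i\in\mathcal{G}$; constraint \eqref{DualDem} becomes $\pi_j+\overline{\lambda}_j\geq \alpha^d_j$ for all $j\in\mathcal{D}$; constraint \eqref{DualFlo} becomes $\pi_l-\overline{\lambda}_l\leq \alpha^f_l$ for all $l\in\mathcal{L}$ after recognizing the telescoping of nodal prices that defines \eqref{TransportPrice}; and \eqref{DualTech} becomes $\pi_m-\overline{\lambda}_m\leq \alpha^\xi_m$ for all $m\in\mathcal{M}$ using the yield-weighted definition of $\pi_m$ in \eqref{PI5}. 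These are precisely the four inequalities in the theorem statement.

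There is essentially no obstacle here: the content of the theorem is a direct rewriting of dual feasibility in terms of the stakeholder-level prices. The one point worth emphasizing in the write-up is that, in contrast to Theorem \ref{ThmLowerBounds} (whose lower bounds hold only for cleared stakeholders, since they come from $\phi\geq 0$ combined with strictly positive allocations), the upper bounds here hold for \emph{every} stakeholder in $\mathcal{G},\mathcal{D},\mathcal{L},\mathcal{M}$, because dual feasibility is required at every index regardless of whether the corresponding primal allocation is active. Together with Theorem \ref{ThmLowerBounds}, this pins the cleared-stakeholder prices within the intervals $[\alpha,\alpha+\overline{\lambda}]$ (with the inequality reversed for consumers), quantifying how binding capacity constraints translate into price markups.
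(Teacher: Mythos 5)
Your proposal is correct and follows essentially the same route as the paper, which simply cites the condensed dual representation \eqref{DualSub}: the four inequalities are the dual feasibility constraints \eqref{DualSubSup}--\eqref{DualSubTech} after substituting the price identities \eqref{PI1}--\eqref{PI5}. Your added remarks on strong duality and the contrast with Theorem \ref{ThmLowerBounds} are accurate but not needed beyond what the paper's one-line proof asserts.
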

\begin{proof}
	The result follows directly from the dual representation of SC-ST \eqref{DualSub}.
\end{proof}

The bounds defined in Theorem \ref{ThmUpperBounds} are upper bounds on player prices (lower for consumers); this means that we now have lower \eqref{PriceLowerBounds} and upper \eqref{PriceUpperBounds} bounds on prices (\textit{vice versa} for consumers). These results are key in understanding space-time dynamic behavior of prices. 

\begin{subequations}\label{PriceLowerBounds}
	\begin{equation}\label{PLBi}
	\begin{aligned}
	(\pi_{i} -  \alpha^{g}_{i})g_{i} \geq 0,\; i\in\mathcal{G}
	\end{aligned}
	\end{equation}
	\begin{equation}
	\begin{aligned}
	(\alpha^{d}_{j} - \pi_{j})d_{j} \geq 0 ,\; j\in\mathcal{D}
	\end{aligned}
	\end{equation}
	\begin{equation}
	\begin{aligned}
	(\pi_{l} -  \alpha^{f}_{l})f_{l} \geq 0,\; l\in\mathcal{L}
	\end{aligned}
	\end{equation}
	\begin{equation}
	\begin{aligned}
	(\pi_{m} - \alpha^{\xi}_{m})\xi_{m} \geq 0,\; m\in\mathcal{M}
	\end{aligned}
	\end{equation}
\end{subequations}

\begin{subequations}\label{PriceUpperBounds}
	\begin{equation}
	\begin{aligned}
	\pi_{i} \leq \alpha^{g}_{i} + \overline{\lambda}_{i},\; i\in\mathcal{G}
	\end{aligned}
	\end{equation}
	\begin{equation}
	\begin{aligned}
	\pi_{j} \geq \alpha^{d}_{j} - \overline{\lambda}_{j},\; j\in\mathcal{D}
	\end{aligned}
	\end{equation}
	\begin{equation}
	\begin{aligned}
	\pi_{l} \leq \alpha^{f}_{l} + \overline{\lambda}_{l},\; l\in\mathcal{L}
	\end{aligned}
	\end{equation}
	\begin{equation}
	\begin{aligned}
	\pi_{m} \leq \alpha^{\xi}_{m} + \overline{\lambda}_{m},\; m\in\mathcal{M}
	\end{aligned}
	\end{equation}
\end{subequations}

We observe that the lower bounds \eqref{PriceLowerBounds} are enforced subject to the corresponding player receiving a positive allocation. Using the supplier class to illustrate, we have $g_{i}>0\implies \pi_{i}\geq\alpha^{g}_{i}$ (from \eqref{PLBi}). Strong duality provides that the $\overline{\lambda}$ dual variables in \eqref{PriceUpperBounds} are positive only if the corresponding stakeholder is allocated its entire capacity, mathematically: $g_{i}=\bar{g}_{i}\impliedby\overline{\lambda}_{i}>0$ for suppliers, with similar logic for the other classes. The $\overline{\lambda}$ are zero otherwise, e.g., $g_{i}<\bar{g}_{i}\implies\overline{\lambda}_{i}=0$. So a supplier with an allocation $0<g_{i}<\bar{g}_{i}$ experiences a market price equal to its bid due to the bounds $\alpha^{g}_{i}\leq\pi_{i}\leq\alpha^{g}_{i}$. The interplay of strong duality and market prices has important implications on how profits are allocated in coordinated markets. Substituting the price bounding results into the corresponding profit definitions from \eqref{ProfitIdentities} we observe $\overline{\lambda}_{i}>0\implies\phi^{g}_{i}\leq\overline{\lambda}_{i}\bar{g}_{i}$ and $\overline{\lambda}_{i}=0\implies\phi^{g}_{i}=0$. Following similar logic, we obtain bounds for the other classes $\overline{\lambda}_{j}>0\implies\phi^{d}_{j}\leq\overline{\lambda}_{j}\bar{d}_{j}$, $\overline{\lambda}_{l}>0\implies\phi^{f}_{l}\leq\overline{\lambda}_{l}\bar{f}_{l}$, and $\overline{\lambda}_{m}>0\implies\phi^{\xi}_{m}\leq\overline{\lambda}_{m}\bar{\xi}_{m}$.

\begin{theorem}\label{ThmProfitBounds}
	A solution of SC-ST delivers clearing prices $\pi$ and allocations $(g,d,f,\xi)$ such that a player can be allocated a positive profit only if it is allocated its entire capacity.
\end{theorem}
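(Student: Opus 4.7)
The plan is to prove the contrapositive for each stakeholder class: if a player receives an allocation strictly less than its capacity, then its profit is exactly zero. The argument rests on combining complementary slackness for the capacity constraints with the lower and upper price bounds already established in Theorems \ref{ThmLowerBounds} and \ref{ThmUpperBounds}.

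First I would focus on a single class, say suppliers, to exhibit the mechanism. By complementary slackness applied to the capacity inequality \eqref{PrimalSup} and its dual multiplier, $g_i < \overline{g}_i$ forces $\overline{\lambda}_i = 0$. Substituting into the upper bound from Theorem \ref{ThmUpperBounds} yields $\pi_i \leq \alpha^g_i$. If in addition the supplier is cleared ($g_i > 0$), Theorem \ref{ThmLowerBounds} gives $\pi_i \geq \alpha^g_i$, so $\pi_i = \alpha^g_i$ and hence $\phi^g_i = (\pi_i - \alpha^g_i) g_i = 0$. If the supplier is not cleared, $g_i = 0$ trivially makes $\phi^g_i = 0$. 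In either case, $g_i < \overline{g}_i$ implies $\phi^g_i = 0$, which is the contrapositive of the desired statement. Equivalently, $\phi^g_i > 0$ forces $g_i = \overline{g}_i$.

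Next I would replicate the same three-step chain (complementary slackness, upper bound, lower bound) for consumers using \eqref{PrimalDem} together with the bounds $\pi_j \geq \alpha^d_j - \overline{\lambda}_j$ and $\pi_j \leq \alpha^d_j$ from Theorems \ref{ThmUpperBounds} and \ref{ThmLowerBounds}, respectively; for transport providers using \eqref{PrimalFlo} together with the corresponding $\pi_l$ bounds; and for technology providers using \eqref{PrimalTech} together with the $\pi_m$ bounds. In each case $\overline{\lambda}_u = 0$ collapses the two-sided bound on the stakeholder price to an equality at the bid, which drives the profit to zero through the identity in \eqref{ProfitIdentities}.

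There is no real obstacle beyond bookkeeping: all the heavy lifting (nonnegativity of profits, price lower bounds for cleared players, price upper bounds from the dual) is already available, and the four cases are structurally identical. The only subtle point worth flagging is that the statement should be understood in the contrapositive form, since a player at full capacity is not guaranteed to earn strictly positive profit — the result only excludes partial allocations as a source of positive profit. I would close the proof by a brief remark that combining this with Theorem \ref{ThmLowerBounds} yields the intuitive picture already summarized in the paragraph preceding the theorem: $\phi^g_i \leq \overline{\lambda}_i \overline{g}_i$, and analogous bounds hold for the other classes.
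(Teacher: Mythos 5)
Your proposal is correct and follows essentially the same route as the paper: both arguments combine complementary slackness on the capacity constraints with the lower bounds of Theorem~\ref{ThmLowerBounds} and the upper bounds of Theorem~\ref{ThmUpperBounds} to pin the stakeholder price to its bid whenever the allocation is strictly below capacity, forcing the profit in \eqref{ProfitIdentities} to zero. The paper merely organizes this as an exhaustive case analysis over the full-capacity, partial, and dry player sets rather than as a contrapositive, so the two presentations are logically equivalent.
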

\begin{proof}
	We have that the market with players $(\mathcal{G},\mathcal{D},\mathcal{L},\mathcal{M})$ and the set of cleared players $(\mathcal{G}^{*},\mathcal{D}^{*},\mathcal{L}^{*},\mathcal{M}^{*})$ (i.e., $\mathcal{G}^{*}:=\{i\in\mathcal{G}|g_{i}>0\}$). Define the sets $(\mathcal{G}^{\bullet},\mathcal{D}^{\bullet},\mathcal{L}^{\bullet},\mathcal{M}^{\bullet})$ and $(\mathcal{G}^{\circ},\mathcal{D}^{\circ},\mathcal{L}^{\circ},\mathcal{M}^{\circ})$ where $\mathcal{G}^{\bullet}:=\{i\in\mathcal{G}|g_{i}=\bar{g}_{i}\}$ and $\mathcal{G}^{\circ}:=\{i\in\mathcal{G}|0<g_{i}<\bar{g}_{i}\}$, so we have $\mathcal{G}^{*}\setminus\mathcal{G}^{\bullet}=\mathcal{G}^{\circ}$ (with the same for the other classes). Further, define the sets of dry players $(\mathcal{G}^{\odot},\mathcal{D}^{\odot},\mathcal{L}^{\odot},\mathcal{M}^{\odot})$ where $\mathcal{G}^{\odot}:=\{i\in\mathcal{G}|g_{i}=0\}$, having $\mathcal{G}\setminus\mathcal{G}^{*}=\mathcal{G}^{\odot}$. We can express the price bounds from \eqref{PriceLowerBounds} as
	\begin{equation*}
		\begin{aligned}
		\pi_{i}
		\begin{cases}
		\geq \alpha^{g}_{i},& i\in\mathcal{G}^{*}\\
		\in\mathbb{R},& i\in\mathcal{G}^{\odot}
		\end{cases}
		\end{aligned}
	\end{equation*}
	\begin{equation*}
		\begin{aligned}
		\pi_{j}
		\begin{cases}
		\leq \alpha^{d}_{j},& j\in\mathcal{D}^{*}\\
		\in\mathbb{R},& j\in\mathcal{D}^{\odot}
		\end{cases}
		\end{aligned}
	\end{equation*}
	\begin{equation*}
		\begin{aligned}
		\pi_{l}
		\begin{cases}
		\geq \alpha^{f}_{l},& l\in\mathcal{L}^{*}\\
		\in\mathbb{R},& l\in\mathcal{L}^{\odot}
		\end{cases}
		\end{aligned}
	\end{equation*}
		\begin{equation*}
		\begin{aligned}
		\pi_{m}
		\begin{cases}
		\geq \alpha^{\xi}_{m},& m\in\mathcal{M}^{*}\\
		\in\mathbb{R},& m\in\mathcal{M}^{\odot}
		\end{cases}
		\end{aligned}
	\end{equation*}
	
	and from \eqref{PriceUpperBounds} as
	\begin{equation*}
	\begin{aligned}
	\pi_{i}
	\begin{cases}
	\leq \alpha^{g}_{i} + \overline{\lambda}_{i},& i\in\mathcal{G}^{\bullet}\\
	\leq \alpha^{g}_{i},& i\in\mathcal{G}^{\odot}\cup\mathcal{G}^{\circ}
	\end{cases}
	\end{aligned}
	\end{equation*}
	\begin{equation*}
	\begin{aligned}
	\pi_{j}
	\begin{cases}
	\geq \alpha^{d}_{j} + \overline{\lambda}_{j},& j\in\mathcal{D}^{\bullet}\\
	\geq \alpha^{d}_{j},& j\in\mathcal{D}^{\odot}\cup\mathcal{D}^{\circ}
	\end{cases}
	\end{aligned}
	\end{equation*}
	\begin{equation*}
	\begin{aligned}
	\pi_{l}
	\begin{cases}
	\leq \alpha^{f}_{l} + \overline{\lambda}_{l},& l\in\mathcal{L}^{\bullet}\\
	\leq \alpha^{f}_{l},& l\in\mathcal{L}^{\odot}\cup\mathcal{L}^{\circ}
	\end{cases}
	\end{aligned}
	\end{equation*}
	\begin{equation*}
	\begin{aligned}
	\pi_{m}
	\begin{cases}
	\leq \alpha^{\xi}_{m} + \overline{\lambda}_{m},& m\in\mathcal{M}^{\bullet}\\
	\leq \alpha^{\xi}_{m},& m\in\mathcal{M}^{\odot}\cup\mathcal{M}^{\circ}
	\end{cases}
	\end{aligned}
	\end{equation*}
	
	Substitution of the price bounds into the profit definitions results in the profit bounds:
	\begin{equation*}
	\begin{aligned}
	\phi_{i}
	\begin{cases}
	\leq \overline{\lambda}_{i}\bar{g}_{i},& i\in\mathcal{G}^{\bullet}\\
	\leq 0,& i\in\mathcal{G}^{\odot}\cup\mathcal{G}^{\circ}\\
	\geq 0,& i\in\mathcal{G}
	\end{cases}
	\end{aligned}
	\end{equation*}
	\begin{equation*}
	\begin{aligned}
	\phi_{j}
	\begin{cases}
	\leq \overline{\lambda}_{j}\bar{d}_{j},& j\in\mathcal{D}^{\bullet}\\
	\leq 0,& j\in\mathcal{D}^{\odot}\cup\mathcal{D}^{\circ}\\
	\geq 0,& j\in\mathcal{D}
	\end{cases}
	\end{aligned}
	\end{equation*}
	\begin{equation*}
	\begin{aligned}
	\phi_{l}
	\begin{cases}
	\leq \overline{\lambda}_{l}\bar{f}_{l},& l\in\mathcal{L}^{\bullet}\\
	\leq 0,& l\in\mathcal{L}^{\odot}\cup\mathcal{L}^{\circ}\\
	\geq 0,& l\in\mathcal{L}
	\end{cases}
	\end{aligned}
	\end{equation*}
	\begin{equation*}
	\begin{aligned}
	\phi_{m}
	\begin{cases}
	\leq \overline{\lambda}_{m}\bar{\xi}_{m},& m\in\mathcal{M}^{\bullet}\\
	\leq 0,& m\in\mathcal{M}^{\odot}\cup\mathcal{M}^{\circ}\\
	\geq 0,& m\in\mathcal{M}
	\end{cases}
	\end{aligned}
	\end{equation*}
	The profit bounds identified exhaust all possible player outcomes. Only players in the sets $(\mathcal{G}^{\bullet},\mathcal{D}^{\bullet},\mathcal{L}^{\bullet},\mathcal{M}^{\bullet})$ have a positive profit bound, completing the proof.
\end{proof}

Theorem \ref{ThmProfitBounds} informs the distribution of profit to market stakeholders. Though the clearing problem can posses degeneracy (solution multiplicity), it is limited to a specific subset of stakeholders in any particular outcome. Given our analysis, it is necessary to confirm that there will be at least one stakeholder that is cleared in a non-dry market with a positive profit bound whom can be allocated profit by the ISO.

\begin{theorem}\label{ThmAtLeastOne}
	A solution of SC-ST delivers clearing prices $\pi$ and allocations $(g,d,f,\xi)$ such that at least one market player has a strictly positive bound on its profit allocation in a non-dry market.
\end{theorem}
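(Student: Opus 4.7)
My plan is to argue by contradiction, leveraging strong duality of the clearing LP together with the profit-bound catalog of Theorem \ref{ThmProfitBounds}. Suppose no stakeholder carries a strictly positive bound on its profit allocation. The case analysis in the proof of Theorem \ref{ThmProfitBounds} shows that the only strictly positive profit bounds are of the form $\overline{\lambda}\cdot\bar{x}$ attached to players in the at-capacity sets $\mathcal{G}^{\bullet},\mathcal{D}^{\bullet},\mathcal{L}^{\bullet},\mathcal{M}^{\bullet}$. The hypothesis therefore forces every capacity dual to vanish, i.e.\ $\overline{\lambda}_i=\overline{\lambda}_j=\overline{\lambda}_l=\overline{\lambda}_m=0$.

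The next step is to push these zero duals through the dual. Substituting into \eqref{DualObj} collapses the dual value to zero, and strong LP duality between \eqref{Primal} and \eqref{Dual} forces the primal total surplus $\varphi^{*}$ to be zero as well. Combining this with the Lagrangian identity \eqref{LDual} (the total surplus equals $\sum_{i}\phi^{g}_i+\sum_{j}\phi^{d}_j+\sum_{l}\phi^{f}_l+\sum_{m}\phi^{\xi}_m$) and Theorem \ref{ThmProfit} (each individual profit is non-negative), every individual profit must then be identically zero. To extract the required contradiction under the non-dry hypothesis I would select a basic optimal vertex of \eqref{Primal}: at such a vertex each non-basic variable sits at either $0$ or at its capacity, so in a non-dry basic optimum at least one variable is forced onto its capacity bound, activating the corresponding $\overline{\lambda}>0$ by complementary slackness and contradicting the opening assumption.

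The main obstacle I expect is this final reconciliation between ``non-dry'' and strictly positive duals. The clearing LP can be degenerate: a market in which every matched trade is break-even admits the trivial zero optimum alongside a continuum of non-trivial optima with identical zero surplus, so ``non-dry'' read merely as ``some allocation is positive'' is not by itself enough to guarantee an active capacity constraint. The cleanest remedy is the basic-vertex selection outlined above (or, equivalently, strengthening ``non-dry'' to ``$\varphi^{*}>0$,'' from which the claim is almost immediate via Theorem \ref{ThmProfit} and Theorem \ref{ThmProfitBounds}); the remaining steps---zeroing the duals, applying strong duality, and invoking non-negativity of profits---are essentially mechanical.
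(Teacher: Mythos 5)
Your argument takes a genuinely different route from the paper's. The paper works entirely in the primal: it assumes every cleared player sits strictly below capacity and argues that the ISO could then scale the allocation up until some bound becomes active, improving the (linear, homogeneous) objective and contradicting optimality --- a perturbation/extreme-point argument that directly forces $(\mathcal{G}^{\bullet},\mathcal{D}^{\bullet},\mathcal{L}^{\bullet},\mathcal{M}^{\bullet})\neq\varnothing$. You instead go through the dual: the absence of any positive profit bound forces every $\overline{\lambda}$ to vanish, which collapses \eqref{DualObj} to zero, and strong duality then pins the total surplus (equivalently, via revenue adequacy, the sum of the nonnegative profits) at zero. That chain is correct, and your diagnosis of the degenerate break-even market is on point: a supplier and a consumer bidding the same price at the same node admit a continuum of non-dry optima with zero surplus and all $\overline{\lambda}=0$, so the theorem is false if ``non-dry'' means only ``some allocation is positive.'' Note that the paper's own proof quietly assumes $z^{*}>0$ at the step ``$z^{*}+\Delta z>z^{*}$'' and, moreover, only ever establishes that some player is \emph{at capacity}; it never shows the associated multiplier is positive, which is what a \emph{strictly} positive profit bound $\overline{\lambda}_{i}\bar{g}_{i}$ actually requires.

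That last point is also where your proposed patch fails. Selecting a basic optimal vertex does guarantee, for this conic-plus-box feasible set, that a nonzero vertex has some variable at its upper bound; but complementary slackness runs the wrong way for you --- it gives $\overline{\lambda}_{i}>0\implies g_{i}=\bar{g}_{i}$, not the converse. In the break-even example the vertex $(g,d)=(\bar{g},\bar{d})$ is optimal with both variables at capacity and both multipliers still zero, so no strictly positive profit bound materializes. Your parenthetical alternative is the one that actually closes the argument: assume $\varphi^{*}>0$, observe that \eqref{DualObj} is then a strictly positive sum of nonnegative terms $\overline{\lambda}_{u}\bar{x}_{u}$, so at least one such term is strictly positive, and by Theorem \ref{ThmProfitBounds} that player carries a strictly positive profit bound. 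State the theorem with that strengthened hypothesis and drop the vertex machinery.
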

\begin{proof}
	Proof is by contradiction and relies on the extreme point properties of linear programming solutions. Assume we have a market with an optimal set of clear transactions $(\mathcal{G}^{*},\mathcal{D}^{*},\mathcal{L}^{*},\mathcal{M}^{*})$ all satisfying $(g^{*}_{i},d^{*}_{j},f^{*}_{l},\xi^{*}_{m})<(\bar{g}_{i},\bar{d}_{j},\bar{f}_{l},\bar{\xi}_{m})$, meaning $(\mathcal{G}^{\bullet},\mathcal{D}^{\bullet},\mathcal{L}^{\bullet},\mathcal{M}^{\bullet})=\varnothing$. The objective value is $z^{*}=\sum_{j \in \mathcal{D}}{\alpha^{d}_{j}}d^{*}_{j} -
	\sum_{i \in \mathcal{G}}{\alpha^{g}_{i}}g^{*}_{i} -
	\sum_{l\in\mathcal{L}}{\alpha^{f}_{l}f^{*}_{l}} - 
	\sum_{m\in\mathcal{M}}{\alpha^{\xi}_{m}\xi^{*}_{m}}$. Since none of the allocation bounds $(\bar{g}_{i},\bar{d}_{j},\bar{f}_{l},\bar{\xi}_{m})$ are active, the ISO can increase its allocations by amounts $(\Delta g_{i},\Delta d_{j},\Delta f_{l},\Delta \xi_{m})$ subject to the market clearing constraints until one or more of the allocations reaches its bound. The solution $(g^{*}_{i}+\Delta g_{i},d^{*}_{j}+\Delta d_{j}f^{*}_{l}+\Delta f_{l},\xi^{*}_{m}+\Delta \xi_{m})$ produces an objective value $z^{*}+\Delta z$, and if $z^{*}$ is optimal, then $z^{*}+\Delta z > z^{*}$, which is a contradiction. Therefore, in any non-dry market, there will be at least one stakeholder in the sets $(\mathcal{G}^{\bullet},\mathcal{D}^{\bullet},\mathcal{L}^{\bullet},\mathcal{M}^{\bullet})$ having a positive upper bounds on its allocatable profits.
\end{proof}

We next examine space-time price dynamics; here, we are interested in the concept of space-time price volatility (variability) and how we can design markets to mitigate price volatility for market stakeholders. Price volatility may manifest in both the spatial and temporal dimensions, with spatial volatility manifesting as price variation between spatial nodes $n\in\mathcal{N}$ and temporal volatility between temporal nodes $t\in\mathcal{T}$. Transport providers play a key role in driving and controlling space-time volatility, as we see next. 

\begin{theorem}\label{ThmTransportPrices}
	Transport providers can drive space-time price volatility to zero.
\end{theorem}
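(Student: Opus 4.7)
The plan is to exhibit conditions under which the transport providers' dual constraints sandwich every transport price $\pi_l$ between two equal bounds, forcing the space-time prices of a given product to coincide across all nodes connected by transport, and thereby eliminating space-time price volatility.

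First I would recall from the price identity \eqref{TransportPrice} that the transport price is $\pi_l = \pi_{n_r(l),t_r(l),p(l)} - \pi_{n_b(l),t_b(l),p(l)}$, so that ``zero space-time volatility for product $p$'' is equivalent to $\pi_l=0$ for every arc $l$ transporting $p$. From the dual constraint \eqref{DualSubFlo} we always have the upper bound $\pi_l \leq \alpha^f_l + \overline{\lambda}_l$, and from Theorem \ref{ThmLowerBounds} we have the lower bound $\pi_l \geq \alpha^f_l$ whenever the transporter is cleared. Together, these pin $\pi_l$ inside the interval $[\alpha^f_l,\;\alpha^f_l + \overline{\lambda}_l]$.

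Next I would consider the constructive scenario in which, for each space-time arc $l$, a transport provider is available with bid $\alpha^f_l = 0$ and capacity $\overline{f}_l$ sufficiently large that the capacity constraint \eqref{PrimalFlo} is slack at the optimum. Complementary slackness then gives $\overline{\lambda}_l = 0$, so the sandwich bound collapses to $\pi_l = 0$, i.e., $\pi_{n_r(l),t_r(l),p(l)} = \pi_{n_b(l),t_b(l),p(l)}$. By transitivity of equality along paths in the transport graph, $\pi_{n,t,p}$ is then constant over every connected component of the subgraph induced by these zero-cost, slack-capacity arcs; if the transporters provide such arcs between every pair of adjacent space-time nodes, the prices for product $p$ are equal over all of $\mathcal{S}$.

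The main obstacle will be handling arcs whose transporters are \emph{not} cleared ($f_l=0$), since Theorem \ref{ThmLowerBounds} only furnishes the lower bound under positive allocation, leaving the one-sided inequality $\pi_l\leq 0$. I would resolve this by exploiting the freedom in the construction: for each such arc include a reverse-direction transporter $l'$ with the same zero bid and slack capacity, so that $\pi_{l'} = -\pi_l \leq 0$ also holds, and the two opposite inequalities force $\pi_l=0$ regardless of whether the flow is strictly positive. This ``mirror arc'' argument, coupled with the sandwich above, yields the stated ability of transport providers to drive space-time volatility to zero.
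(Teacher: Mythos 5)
Your main argument is essentially the paper's own proof: both combine the dual feasibility bound $\pi_{l} \leq \alpha^{f}_{l} + \overline{\lambda}_{l}$ from \eqref{DualSubFlo} with the lower bound $\pi_{l} \geq \alpha^{f}_{l}$ from Theorem \ref{ThmLowerBounds} for cleared transporters, and then observe that when the capacity constraint \eqref{PrimalFlo} is slack, complementary slackness forces $\overline{\lambda}_{l}=0$, so the sandwich collapses to $\pi_{l}=\alpha^{f}_{l}$, which vanishes for a zero-bid transporter and equalizes the two nodal prices. Where you go beyond the paper is the uncleared case $f_{l}=0$: the paper simply restricts attention to $f_{l}>0$, whereas you patch the missing lower bound with a reverse-direction ``mirror'' transporter. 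That patch is sound for spatial arcs (geographic transport in both directions is a legitimate market participant), but be careful with temporal arcs: the model requires $t_{r}(l)\geq t_{b}(l)$, and the mirror of a storage arc would move product backward in time, which has no physical counterpart in the formulation. For an uncleared storage arc you therefore only obtain the one-sided bound $\pi_{n,t',p}\leq \pi_{n,t,p}$ for $t'>t$; equality along the temporal dimension requires the storage arc to actually carry flow. This does not damage the theorem --- it is a ``can'' statement, witnessed by the cleared, slack, zero-bid configuration, exactly as in the paper --- but the mirror-arc step should be restricted to the spatial arc set $\mathcal{A}_{\mathcal{N}}$.
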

\begin{proof}
	The transport price in \eqref{TransportPrice} is subject to bounds defined by the dual program \eqref{DualSub} and the Lagrangian dual \eqref{LDual}, from we derive the bounds
	\begin{equation*}
	\begin{aligned}
	\alpha^{f}_{l} \leq \pi_{l} \leq \alpha^{f}_{l} + \overline{\lambda}_{l}, \quad l\in\mathcal{L}\;|\;f_{l}>0
	\end{aligned}
	\end{equation*}
	for transport providers with positive allocations. If a transport provider does not receive a profit, either because its allocation $f_{l}$ is less than its transport capacity $\bar{f}_{l}$ or due to degeneracy, then these bounds become:
	\begin{equation*}
	\begin{aligned}
	\alpha^{f}_{l} \leq \pi_{l} \leq \alpha^{f}_{l}, \quad l\in\mathcal{L}\;|\;f_{l}>0
	\end{aligned}
	\end{equation*}
	implying $\pi_{n,t,p}=\pi_{n',t',p}$ for nodes $(n,n')\in\mathcal{N}$ and $(t,t')\in\mathcal{T}$ when $\alpha^{f}_{l}=0$.
\end{proof}

Transport bids $\alpha^{f}_{l}$ lower than local supply bids (intuitively) allow access to product sources at other nodes or time points with favorable prices. Perhaps the most important interpretation of these properties is that the spatial and temporal dimensions of the clearing model are fundamentally the same. This indicates, for instance, that storage systems and geographical transport systems play a key role in determining price dynamics. A couple of simple example problems that illustrate these theoretical properties are included as supplementary material. The first demonstrates theoretical properties related to profit allocations and prices, and how those theorems may be interpreted in practice. The second focuses on the behavior of temporal transportation. We also comment on how theoretical properties are observed in the context of our case study. 

\section{Case Study}

We illustrate our theoretical developments by considering the waste-to-energy case study described by \citet{Hu2018}. Here, we analyze the potential creation of a coordinated livestock waste (manure) market (a bioeconomy) in the State of Wisconsin that can be used to generate valuable energy products (e.g., biogas, electricity). This problem can be cast as a supply chain problem and can be interpreted as a coordinated market in which suppliers of waste (dairy farmers) seek to satisfy demands of valuable products derived from waste (e.g., biogas and electricity). Specifically, in this market we would use waste processing technologies (comprised of anaerobic digestion and power generators) to produce biogas and electricity from manure. Moreover, in this market we would have geographical transport of waste and temporal transport via waste storage. This market can also help mitigate myriad environmental issues associated with manure management; specifically, the practice of spreading manure on crop fields (as a fertilizer) leads to uncontrolled degradation of organic matter contained in manure and leads to methane and pathogen emissions \citep{Sharara2017}. Unfortunately, manure processing technologies are expensive and it is thus desirable to ensure that biogas and electricity generate sufficient revenue.  

Here, we reexamine this problem, using the same data as \citet{Hu2018} (including the disposition of farms, biogas technologies, storage capacities, and technology costs) and identify new revenue opportunities for manure processing by exploiting the dynamics of electricity prices.  We aim to show that manure processing systems can respond to dynamic price incentives in the same way that modern natural gas power plants can (because they are able to rapidly commit energy to the grid). We estimate dynamic electricity demand across the State of Wisconsin using the demand curve from \citet{zavala2010dynamic} scaled to the Wisconsin statewide annual electricity consumption rate of 68.8 TWh per year \citep{DOEProfile}, illustrated in Figure \ref{FigDemandCurve}. We also fit the generation curves of \citet{zavala2010dynamic} (representing three distinct conventional grid power suppliers) to reflect reasonable values of Wisconsin off and on-peak electricity prices of 0.05 and 0.18 USD/kWh, respectively (based on 2019 real-time market price data from the \citet{MISO2021}) (illustrated in Figure \ref{FigSupplyCurves}); i.e., the curves are fit to yield these prices at the corresponding on-peak and off-peak output levels. The three supply curves are fit using simple quadratic equations of the form $y = \beta\, x^{2}$ where the parameters $g$ are [1.66\e{-5},8.31\e{-6},4.15\e{-5}]. We retain the three separate curves from \citet{zavala2010dynamic} as a representation of different generator types, with varying costs, but discretize them to intervals of 100 MW of output. Each of these output levels is treated as a separate supplier; this illustrates how the proposed modeling framework can capture nonlinear bidding costs.  We model one week of dynamic behavior (168 hour long periods) using this data.

\begin{figure}[!htb]
	\center{\includegraphics[width=100mm]{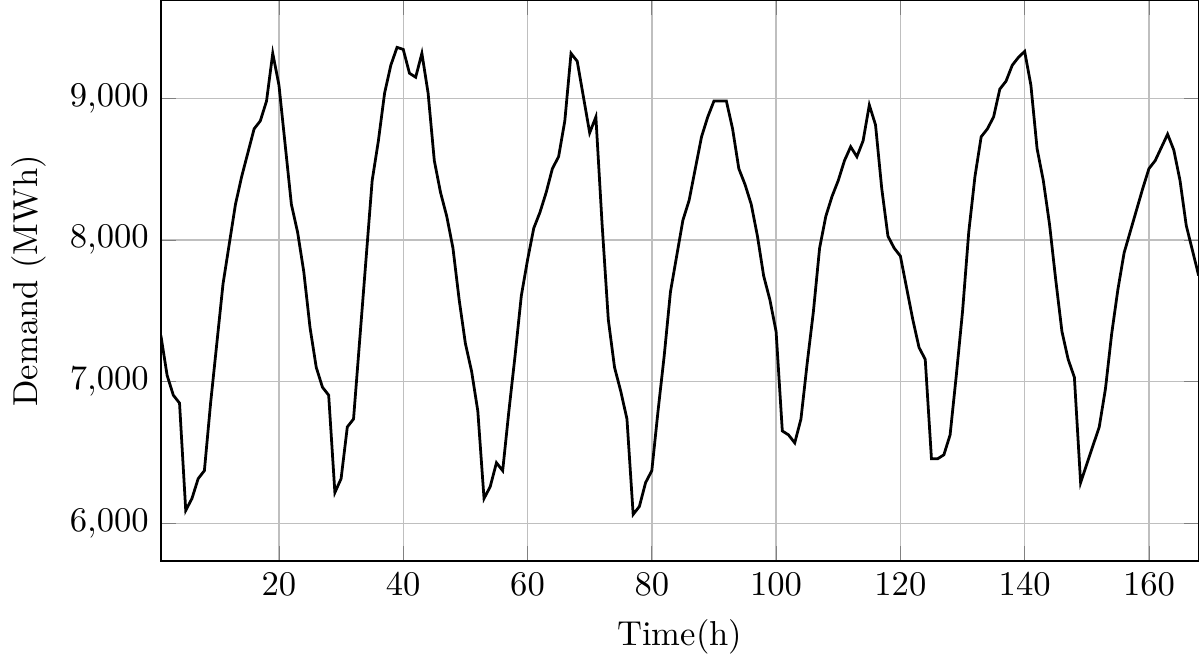}}
	\caption{Electricity demand curve scaled to Wisconsin annual consumption levels. The curve represents one week (168 hours) of demand.}
	\label{FigDemandCurve}
\end{figure}

\begin{figure}[!htb]
	\center{\includegraphics[width=100mm]{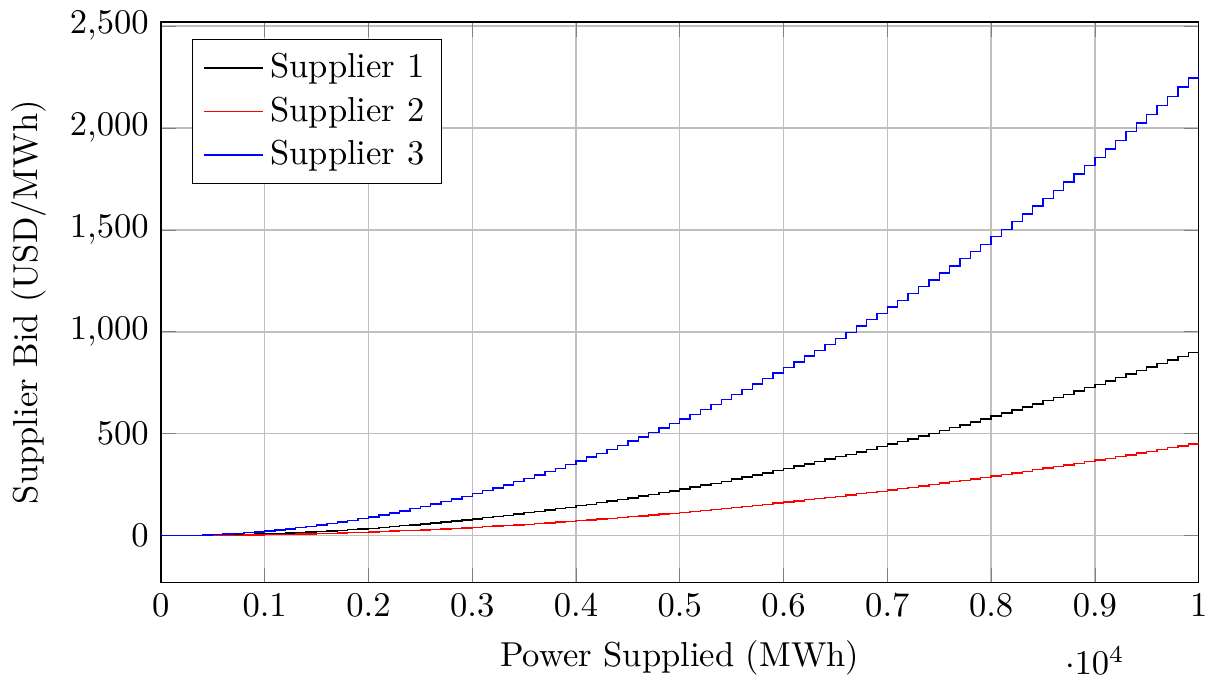}}
	\caption{Electricity supply available from grid producers. Three producers are modeled nonlinear generation curves, representing different conventional electricity producers.}
	\label{FigSupplyCurves}
\end{figure}

The dairy infrastructure comprises 245 farms (concentrated animal feeding operations, or CAFOs) of which 120 are equipped with waste processing and biogas systems. Our problem setup corresponds to the biogas gas study of \citet{Hu2018}, and that paper provides the complete data for farms and technology specifications. We assume a single collection point for the electricity generated from biogas, centered on the City of Madison; since the transportation cost of electricity is contextually small (estimated at 7.5\e{-6} USD/MWh$\cdot$km based on average transmission losses of 5\% per 1000 km reported by \citet{Vaillancourt2014}) this assumption should not have a significant impact on the qualitative nature of our results. The disposition of CAFOs and processing systems are visualized in Figure \ref{FigNodeMap}, which showcases the average hourly production rates of dairy waste throughout the state.

\begin{figure}[!htb]
	\center{\includegraphics[width=100mm]{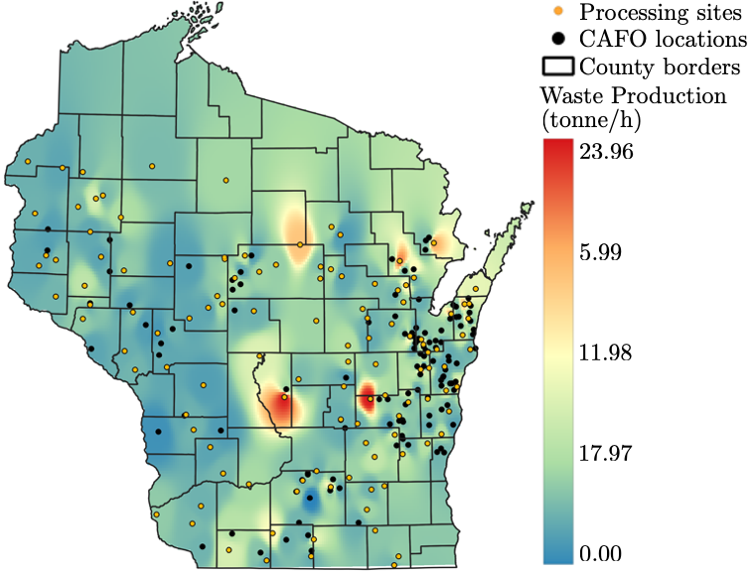}}
	\caption{Hourly dairy waste production rate at CAFOs across Wisconsin (colorbar). Biogas infrastructure and farm distribution are shown by filled circles. CAFO locations are indicated by black circles, CAFOs with storage and biogas processing equipment have a gold center.}
	\label{FigNodeMap}
\end{figure}

Our spatiotemporal model has 168 time periods (each one hour long) with 246 spatial nodes (245 CAFOs and the electricity collection hub). Arcs are constructed connecting all CAFOs to each CAFO with processing facilities on site (i.e., the 125 CAFOs without processing systems are each connected to the 120 equipped with them) and these 120 CAFOs have arcs connecting them to the collection hub at each time point. We assume that these transportation flows occur within a single time period. Temporal flows are interpreted as waste storage; these are implemented as arcs connecting CAFOs over the temporal dimension. To mitigate model size, arcs are constructed across single time points only (i.e., from hour 1 to hour 2, and from 2 to 3, but not from 1 to 3 directly) to model the dynamics of waste storage. This setup also illustrates how we can use our formulation to model various physical phenomena.

In addition to our base model, we include three variations that help illustrate the theoretical properties associated with the model itself, and that provide some useful insights into the base case solution. The cases are the base case, a case with no waste storage (as though electricity can only be produced in the same period that waste is supplied) a case with unlimited storage (storage being both free of cost and effectively unlimited in capacity) and finally a case study in which the base case waste supply is tripled.

We implemented and solve all problems in the Julia programming language (version 1.5.3) \citep{Bezanson2017} and the JuMP modeling language (version 25.1) \citep{Dunning2017} with the Gurboi solver (version 9.1.1) \citep{gurobi}. We characterize the model solution using market prices of electricity and waste, the amount of waste processed to produce biogas, and illustrate some of the dynamics with specific instances of variable values. Overall, our model suggests that all of the dairy waste produced at CAFOs in Wisconsin can be profitably processed to produce biogas and electricity by taking advantage of price fluctuations. Due to the low yield of electricity from waste, the electricity contributed this way is small on the state scale (WI consumes about 17,100 MWh each week, with our statewide biogas network averaging about a 102 MW power rating) the base case total energy supply, including both the conventional grid and contributions from biogas, is illustrated in Figure \ref{FigTotalMWhSupply}.

\subsection{Base Results}

\begin{figure}[!htb]
	\center{\includegraphics[width=100mm]{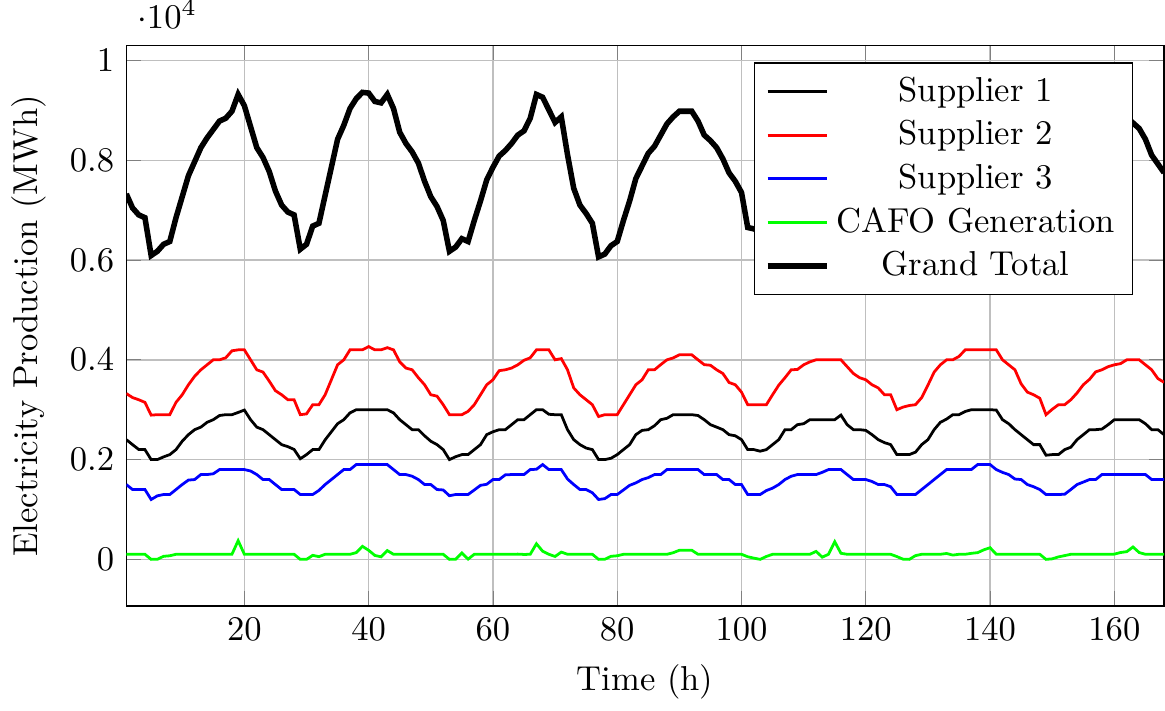}}
	\caption{Total energy supply to the Wisconsin grid over a one week period. Contributions from conventional suppliers and the aggregate production at CAFOs are shown. The grand total replicates the demand curve.}
	\label{FigTotalMWhSupply}
\end{figure}

Base electricity generation rates at various time points (chosen to illustrate temporal variation) are shown in Figure \ref{FigBioenergySupply}. Electricity generation is concentrated at two major CAFOs, with marginal production distributed throughout the state (the figures are presented in log scale to highlight smaller contributions). The disposition of electricity generation throughout the state is influenced by waste availability, transport costs, and local technology availability. We replicate technology placement from \citet{Hu2018}, but observe that a different distribution of technologies may be optimal in a dynamic setting. Nonetheless, we observe temporal variation, with electricity generation at CAFOs following peak demand times and none during off-peak hours when prices are less favorable. The pattern is most prominent between hour 5 and hour 19, which capture the low and high extents of electricity generation rates throughout the state.

Base case hourly waste storage levels are shown in Figure \ref{FigStorageMap} and demonstrate that coupling to the state electrical grid creates an incentive to store and use waste for electricity production. The storage dynamics are cyclic; we observe that waste storage units begin empty, gradually fill ($t$=5) to capacity throughout the state ($t$=11) and then empty ($t$=19) as biogas is used to produce electricity. The cycle repeats over subsequent days. Storage tanks remain full for roughly fourteen hours each day in this case, and empty for ten. This pattern is indicative of biogas accumulation and processing patterns.

\begin{figure}[!htb]
	\center{\includegraphics[width=\textwidth]{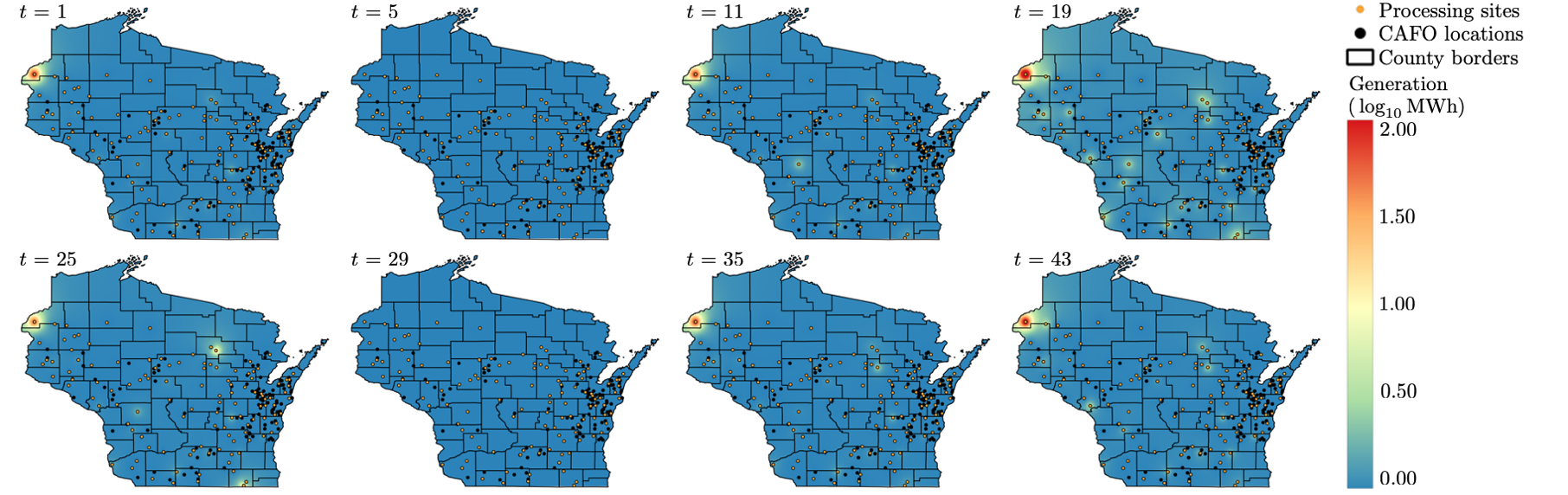}}
	\caption{Electricity produced from biogas (MWh). Peak hourly generation exceeds 300 MWh, primarily focused on two major CAFOs. The graph on the right present log(MWh) values to highlight smaller levels of generation on the order of 2 - 5 MWh.}
	\label{FigBioenergySupply}
\end{figure}

\begin{figure}[!htb]
	\center{\includegraphics[width=150mm]{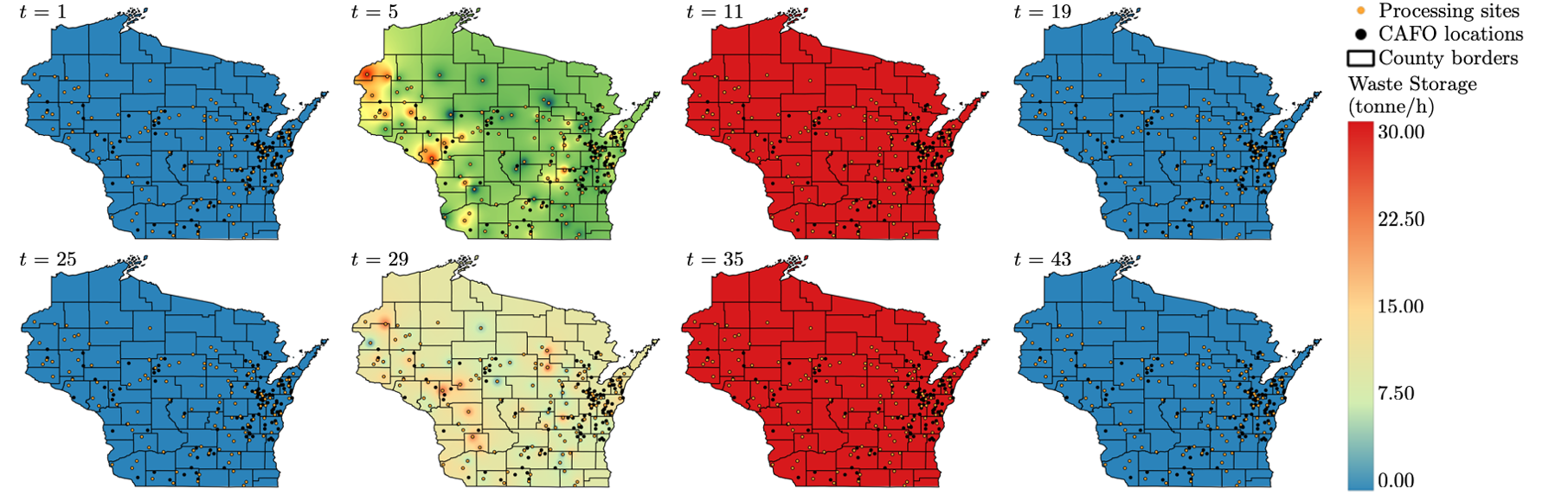}}
	\caption{Distribution of hourly waste storage at select time points. Storage units fill to capacity and empty cyclically over the horizon. Coupling with the electricity grid creates temporally varying incentives for waste storage.}
	\label{FigStorageMap}
\end{figure}

\subsection{Comparative Case Studies}

We now present results from all case studies for comparison. To begin, we present the revenue streams from each of our four case studies in Table \ref{TableCaseRevenues}. Here, we present the total revenue spent (negative) or collected (positive) from the market over all stakeholders of a given class. We include the grand total values (all zero) because these demonstrate Theorem \ref{ThmPrices} (revenue adequacy), which guarantees that revenue balances in the market. Importantly, this also demonstrates Theorem \ref{ThmCompetition} (competitiveness) because we observe that all revenues paid into the market are collected by other market players; i.e., ISO coordination does not incur costs, and does not introduce inefficiencies, it simply accelerates the clearing procedure. Specifically, consumer revenue represents statewide spending on electricity. Supplier revenue includes payments both to conventional electricity producers and to CAFOs. Payments to transportation and technology providers (these also being associated with CAFOs) are listed separately.

\begin{table}
	\caption{Revenue streams for supply chain consumers, suppliers, transport providers (separated by spatial and temporal dimensions) technologies, and grand total values. These demonstrate revenue adequacy in the market clearing procedure.}
	\begin{tabular}{|l|cccc|}
		\hline
		Revenue stream & Base case & No storage & Unlimited storage & Triple waste \\\hline
		Consumer total & -169,019,002 & -169,554,946 & -167,596,601 & -160,798,068 \\
		Supplier total & 168,833,665 & 169,508,659 & 167,550,314 & 160,515,484 \\
		Transport (temporal) total & 137,214 & 0 & 0 & 143,522 \\
		Transport (spatial) total & 47,807 & 45,971 & 45,971 & 138,114 \\
		Technologies total & 316 & 316 & 316 & 948\\
		Grand Total & 0 & 0 & 0 & 0 \\\hline
	\end{tabular}
\label{TableCaseRevenues}
\end{table}

The 245 CAFOs (with 120 technology-enabled sites) together contribute to the base case generation profile in Figure \ref{FigBioenergySupply}, which impacts the market price of electricity as shown in Figure \ref{FigElectricityPriceCases}, which shows the price of electricity over the time entire horizon at the collection node. Electricity prices resulting in our base case setting (solid black line) are lower than those emerging in the no storage case (i.e., when electricity is available from conventional grid suppliers \textit{only}) (dotted line). Interestingly, electricity prices in the no storage case follow the dynamics of the demand curve (Figure \ref{FigDemandCurve}) replicating its shape. Notably, electricity prices in the base case are lower at peak times, illustrating how the coordination system takes advantage of peak electricity prices by concentrating biogas generation during peak hours, electricity prices are lower than they would be with only the conventional electricity producers. The base and no storage cases specifically demonstrate Theorem \ref{ThmSocialWelfare}; we observe that waste storage allows CAFOs to take advantage of real-time electricity price dynamics to make money and reduce electricity prices for consumers. The unlimited storage and triple waste cases also result in lower electricity prices; notably tripling the amount of waste available reduces prices at all time points, while removing storage limits reduces peak prices only. This result is important; increasing the amount of waste allows CAFOs to profit in the electricity market, but it is storage that allows CAFOs to take advantage of peak hour pricing.

\begin{figure}[!htb]
	\center{\includegraphics[width=100mm]{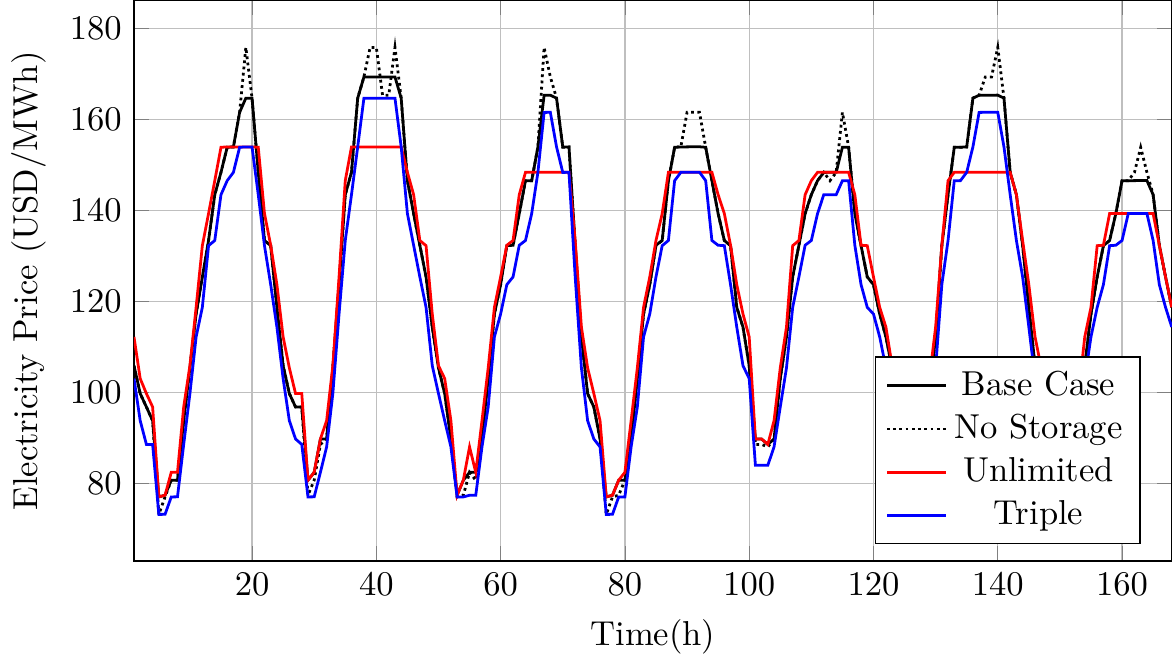}}
	\caption{Electricity prices resulting under (solid black) base case conditions, (dotted black) no storage, (red) with unlimited storage, and (blue) with triple the amount of waste available. Biogas systems are able to take advantage of demand peaks to provide electricity at lower price during peak hours, which reduces peak pricing, and creates value for dairy farmers.}
	\label{FigElectricityPriceCases}
\end{figure}

The greatest decrease in base case price observed is 1.26 cents per kWh (12.6 USD/MWh) representing a significant peak savings, noting that our peak price is around 18 cents per kWh. Over the course of the 168 hour horizon, the average difference between the electricity price with and without the biogas contribution is only 0.34 cents per kWh, demonstrating that the effects are concentrated at the peaks. We suggest that biogas represents an opportunity to reduce electricity prices during peak hours, creating value for consumers and for dairy farmers who can realize the inherent value in waste. Specifically, we observe that the value of waste is linked to storage. Total waste storage state-wide is illustrated in Figure \ref{FigStorageProfiles}, from which we observe that storage is used to its full extent in both the base and triple cases, and in the unlimited storage case we observe significantly greater usage of waste storage, suggesting that the ability to take advantage of peak prices is limited by existing storage capacity. This is reflected in Figure \ref{FigElectricityPriceCases}.

\begin{figure}[!htb]
	\center{\includegraphics[width=100mm]{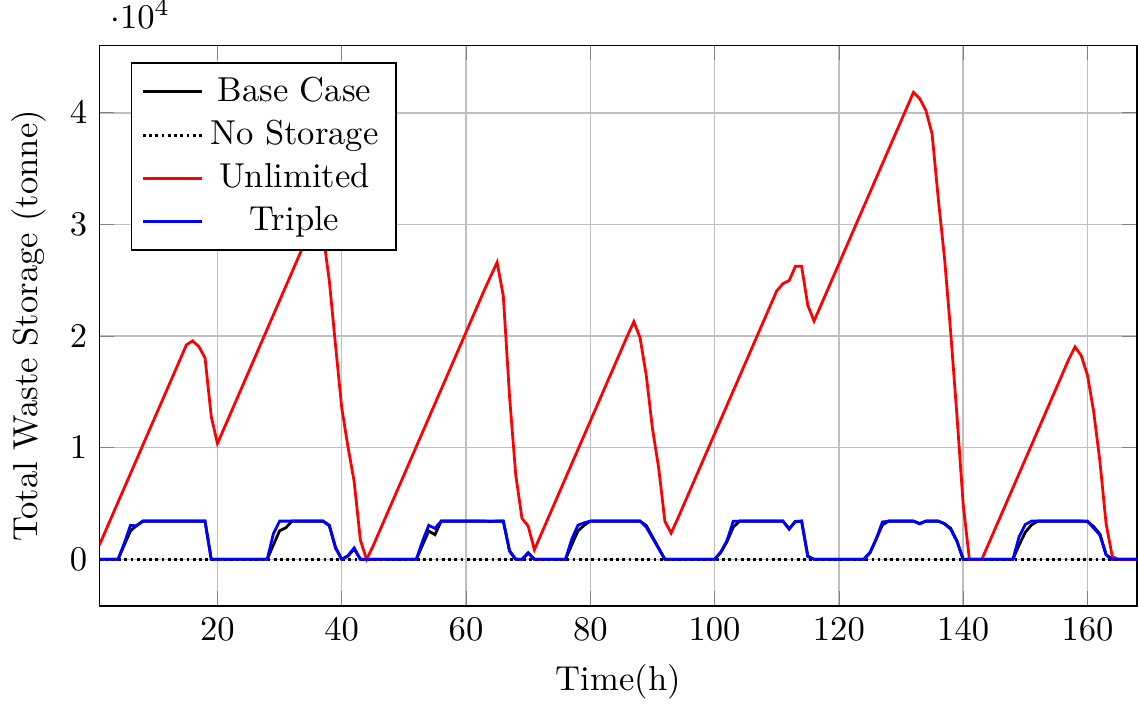}}
	\caption{State-wide waste storage profiles in each of our four case studies. Waste storage is coupled to waste and electricity prices.}
	\label{FigStorageProfiles}
\end{figure}

In Figure \ref{FigWastePriceCases} (which shows the price of waste at a particular CAFO node) we observe that waste prices are driven by dynamics of electricity prices and by waste storage. The peak base case waste price is 13.15 USD/tonne, illustrating how market coordination captures the inherent value of this resource through its potential in the electricity market. In the base case, we observe less waste price variation than in the no storage case, while the triple waste case results in lower waste prices overall than in the base case. The unlimited storage case demonstrates the extreme mitigation of waste price over time, with all the induced electricity price dynamics effectively balanced by storage dynamics resulting in a stable value of waste around 12 USD/tonne. This demonstrates Theorem \ref{ThmTransportPrices}, having eliminated the price volatility in waste prices, and mitigated volatility in electricity prices. There is spatial variation in waste prices as well, though it is on a smaller scale than the temporal variation. Figure \ref{FigWastePriceMaps} illustrates these scales for the base case; temporal price variation (more than 8.00 USD/tonne) dominates spatial variation (approximately 0.30 USD/tonne).

\begin{figure}[!htb]
	\center{\includegraphics[width=100mm]{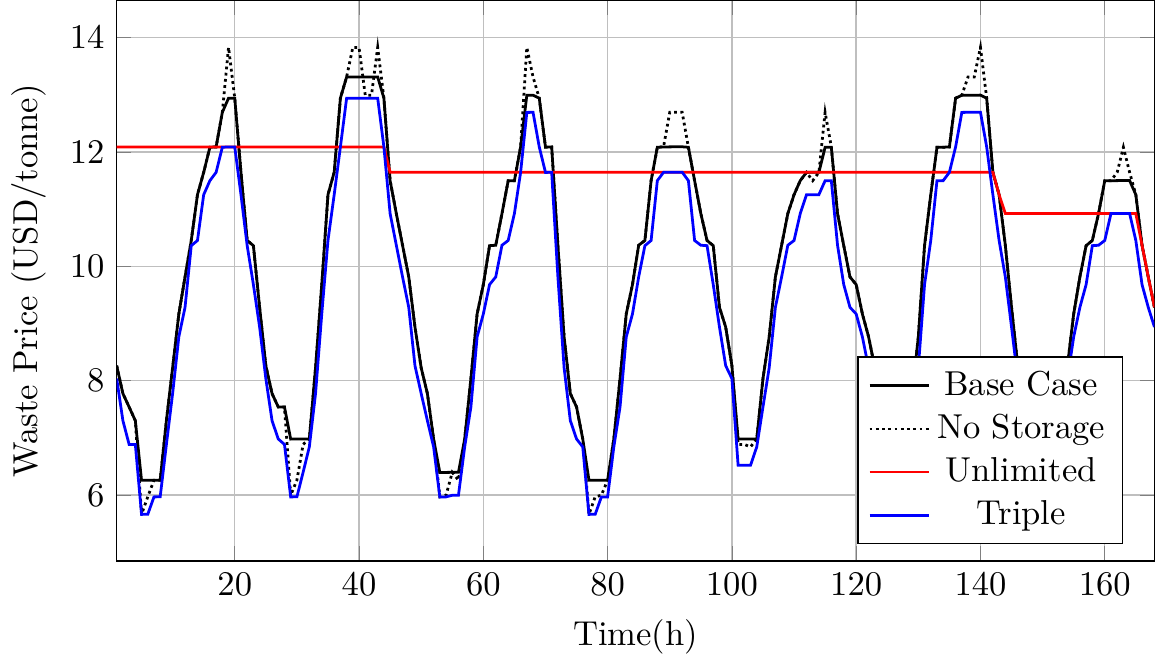}}
	\caption{Waste prices at a particular CAFO are induced the dynamics of electricity prices at the collection node, which translates into a peak value of 13.15 USD per tonne in the base case. With unlimited storage, the price of waste reaches a stable value.}
	\label{FigWastePriceCases}
\end{figure}

\begin{figure}[!htb]
	\center{\includegraphics[width=150mm]{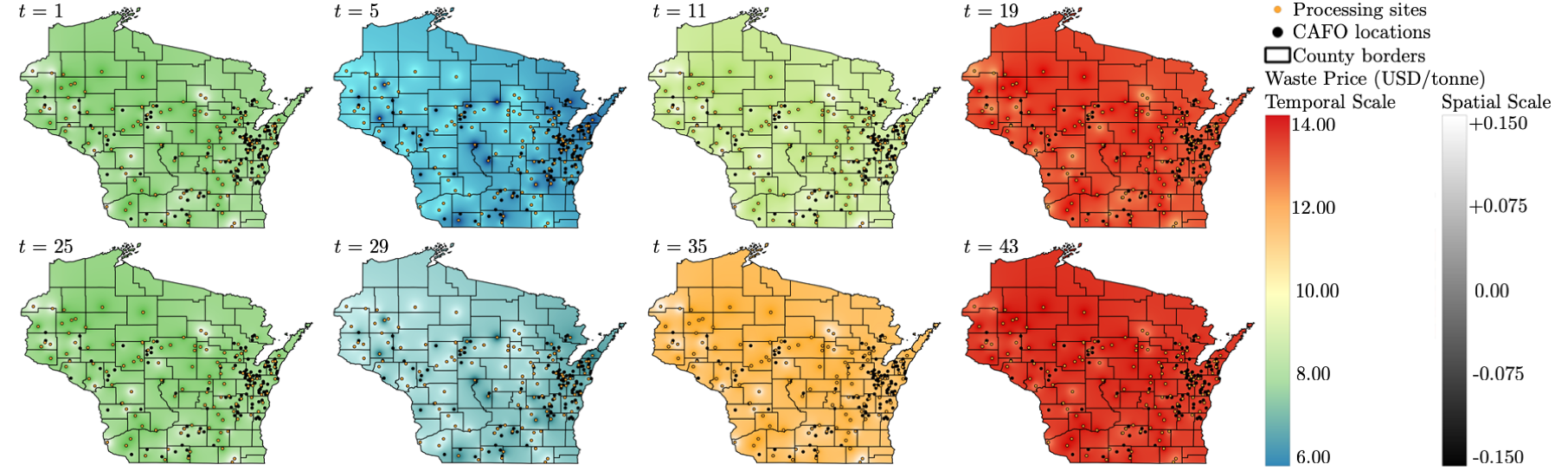}}
	\caption{Base case waste price dynamics show significantly more temporal variation than spatial, with waste value heavily influenced by electricity prices. Spatial variation is influenced by transportation costs. Due to the difference in scale, two separate color bars are used for spatial and temporal price variation. The primary color bar indicates temporal waste prices, while a gray scale overlay captures temporal variation at each time point.}
	\label{FigWastePriceMaps}
\end{figure}

We can interpret price and storage dynamics through electricity generation profiles. In Figure \ref{FigGenerationCases} we illustrate the combined generation of all 245 CAFOs over the 168 hour horizon. Notably, the base case has generation concentrated on the on-peak hours, but due to the storage limitations associated with this case, we also observe generation in off-peak hours. Tripling the amount of waste available does not change the qualitative nature of this result; the curves are similar. In contrast, the no storage case has a constant generation rate of about 100 MWh. In the unlimited storage case, we observe no generation during off-peak hours, with generation concentrated on peak prices. This provides some insight into the limiting nature of storage in this problem, and suggests that the flexibility that storage provides allows farmers to take advantage of the highest prices.

\begin{figure}[!htb]
	\center{\includegraphics[width=100mm]{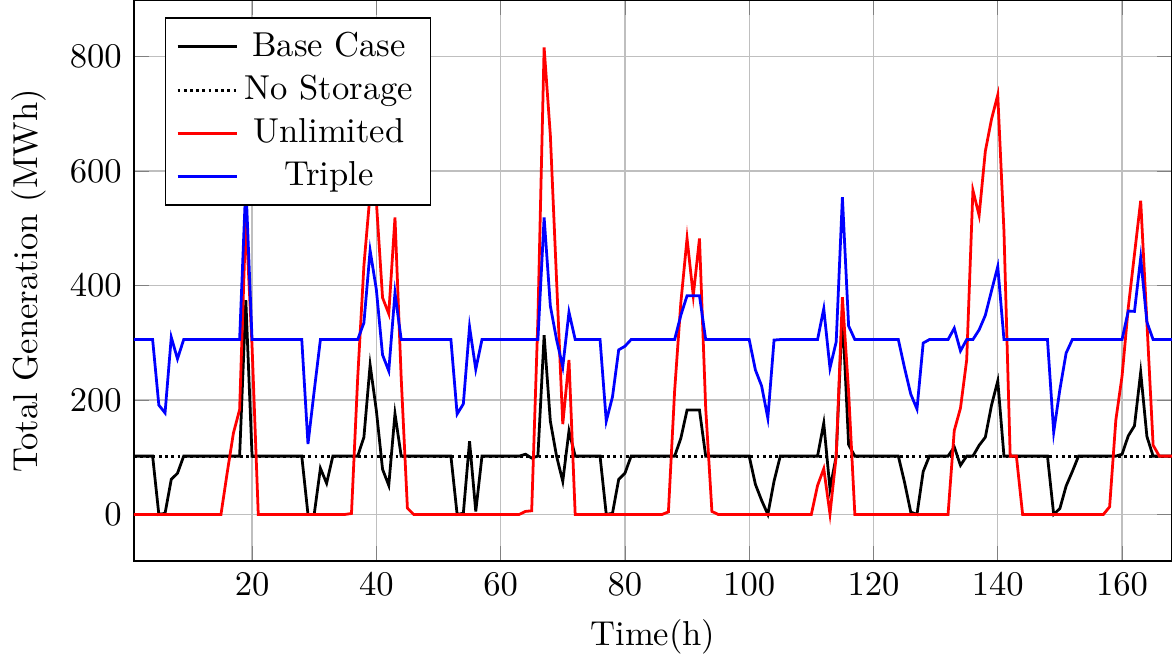}}
	\caption{Combined electricity generation over 245 WI CAFOs. Unlimited storage allows the coordination system to take advantage of real time price peaking, compared with the base case where there is some generation during off-peak hours.}
	\label{FigGenerationCases}
\end{figure}

\subsection{Profit Implications for Dairy Farmers}

In our base result, dairy farms are able to derive (in aggregate) 2,095,953 USD in profit over the course of the 168 hour horizon (extrapolating to an entire year, this could reach 100 million USD). This profit is a result of targeting peak prices in the electricity market through careful planning of waste processing and biogas generation. We have observed how coordination uses these systems, and how limitations in storage influence our results. Averaging our profit value over 245 CAFOs suggests profit on the order of 8,555 USD per CAFO per week, or about 444,855 USD per CAFO per year from participating in the electricity market. Note that we have ignored the disposition of electricity generation in this estimate in favor of an average per-CAFO value.

In our triple-waste case study we observe that all waste is consumed, and peak electricity prices are reduced even further than in the base case. Figure \ref{FigElectricityPriceCases} illustrates this in contrast with the base case and the modified storage cases. From this tripled case, we interpret that there appears to be significant room for growth in the Wisconsin dairy industry. Coupling dairy waste processing to the electrical grid has the potential to generate revenue streams that incentivize profitable waste processing for dairy farmers. The digestate resulting from this process presents a subsequent opportunity, with prospective solutions (like shipping the digestate to nutrient deficient locations in other states as a fertilizer, or processing it further to produce fertilizer products like struvite) not facing the full cost of processing due to the revenue stream created by electricity market participation. The next logical step is coupling digestion to a fertilizer industry that addresses   phosphorus issues (because the electricity grid coupling does nothing to solve the phosphorous issue on its own; it is the revenue stream that is important in this context). Coupling to the electricity grid lowers the cost barriers for market entry in other areas.

It is also important to note that there are other barriers to dairy waste processing that we have not included in our case studies. There are inefficiencies associated with biogas storage (compression and equipment costs) and farms typically use some fraction of the electricity they generate as a part of anaerobic digester operation through combined heat and power (CHP) systems. There is also competition from biogas RINs. These practices reduce the profitability of electricity sales, with the value of the power sold to grid functionally amortized over a greater amount of waste; i.e., the inefficiencies reduce the value of waste. These factors are difficult to address and will be studied in future work; here, the proposed market framework can provide a valuable tool in doing so. 

\section{Conclusions}

We have presented a graph-based dynamic coordinated market framework for multiproduct SC optimization, demonstrating that spatiotemporal transportation induces temporal dynamics in product prices. This unified spatiotemporal framework captures the inherent value of products stemming from interactions between market stakeholders: suppliers, consumers, technology providers, and transportation providers. In particular, our framework captures product transformation, demonstrating that technology prices capture the relative values of inputs and outputs, alongside and in concert with spatiotemporal product transportation. Primal, dual, and Lagrangian formulations are documented, and are used to establish market pricing properties, which provide insight into ISO behavior. Future developments in the market framework will reconcile specific physical phenomena that are desirable in supply chain models, but seem incompatible with the economic formulation.

We illustrate the utility of this model by returning to a previously published problem based on biogas in the Wisconsin dairy industry, and show that by capturing the daily variation of electricity prices, there is a window of opportunity in which generating electricity from dairy biogas is profitable. Moreover, this window is wide enough to accommodate substantially larger amounts of dairy biogas. Our results suggest that dairy farmers could profit (on average) at a rate on the order of 445,000 USD annually, an outcome that motivates techno-economic analysis of the proposed system, accounting for a variety of complicating unmodeled factors, to determine the potential realizable value of this biogas.

\section{Acknowledgments}
We acknowledge support from the U.S. Department of Agriculture (grant 2017-67003-26055) and from the National Science Foundation (under grant 1832208).

\section{References}
\bibliographystyle{model5-names}\biboptions{authoryear}
\small{
\bibliography{SpatiotemporalMarkets.bib}
}

\appendix 
\end{document}